\documentclass[reqno,12pt]{amsart}
\textwidth138mm \textheight222mm

\def\bege{\begin{equation}} \def\ende{\end{equation}}
\def\begr{\begin{eqnarray}} \def\endr{\end{eqnarray}}
\usepackage{amsmath}
\usepackage{amssymb}
\usepackage{cite}

\usepackage{bbm}
\usepackage{amsmath}
\usepackage{txfonts}
\usepackage{stmaryrd}
\usepackage{amssymb}
\usepackage{amsfonts}
\usepackage{times}
\usepackage{mathrsfs}

\usepackage{amsthm}
\usepackage{enumerate}

\usepackage{framed}
\usepackage{lipsum}
\usepackage{color}

\newcommand{\TT}{{\mathbb T}}

\newcommand{\DD}{{\mathbb D}}

\def\D{\mathbb{D}}
\def\N{\mathbb N}

\def\a{\alpha}

\def\t{\theta}

\def\begr{\begin{eqnarray}} \def\endr{\end{eqnarray}}

\def\msk{\medskip}
\def\ol{\overline}

\newtheorem{Lemma}{Lemma}[section]

\newtheorem{Proposition}[Lemma]{Proposition}

\newcounter{other}            

\begin{document}
\title[]{Closed range of generalized integration operators on analytic tent spaces}
	
\author{Rong Yang and Xiangling Zhu$\dagger$}
	 
\address{Rong Yang\\ Institute of Fundamental and Frontier Sciences, University of Electronic Science and Technology of China, 610054, Chengdu, Sichuan, P.R. China.}
\email{yangrong071428@163.com  }

\address{Xiangling Zhu
\\University of Electronic Science and Technology of China, Zhongshan Institute, 528402, Zhongshan, Guangdong, P.R.	China.}
\email{jyuzxl@163.com}

\subjclass[2010]{30H99, 47B38}

\begin{abstract} 
 
 In this paper, the necessary and sufficient conditions for the generalized integration operator \(T_g^{n,k}\) to have closed ranges on the analytic tent spaces are investigated. 
		
\thanks{$\dagger$ Corresponding author.}
\vskip 3mm \noindent{\it Keywords}: Tent space, closed range, generalized integration operator.
\end{abstract}

\maketitle
	
\section{Introduction}

Let \(\mathbb{D}\) denote the open unit disk in the complex plane \(\mathbb{C}\), and let \(\mathbb{T}\) be its boundary. 
 For \(0 < \rho < 1\), define \(E(a,\rho)\) as the Euclidean disk centered at \(a\in\mathbb{D}\) with a radius of \(\rho(1 - |a|^{2})\). Specifically,
\[
E(a,\rho)=\left\{z\in\mathbb{D}:|z - a|<\rho(1 - |a|^{2})\right\}.
\]
For \(0 < r < 1\), let \(\Delta(a,r)\) denote  the pseudo-hyperbolic disk centered at \(a\in\mathbb{D}\) with a radius of \(r\). That is,
\[
\Delta(a,r)=\left\{z\in\mathbb{D}:\left|\frac{a - z}{1-\overline{a}z}\right|<r\right\}.
\]

Let \(H(\mathbb{D})\) denote  the class of all analytic functions on \(\mathbb{D}\). The Hardy space \( H^p  (0 < p < \infty) \), consists of all functions \( f \in H(\mathbb{D}) \) satisfying  
\[
\|f\|_{H^p}^p = \sup_{0 \leq r < 1} \int_0^{2\pi} |f(r e^{i\theta})|^p \frac{d\theta}{2\pi} < \infty.
\]
 For \(0 < p < \infty\) and \(\alpha > -1\), the weighted Bergman space \(A_\alpha^p\) is defined as the collection of all functions \(f \in H(\mathbb{D})\) such that
\[
\|f\|_{A_\alpha^p}^p = (\alpha + 1) \int_{\mathbb{D}} |f(z)|^p (1 - |z|^2)^\alpha dA(z) < \infty,
\]
where \(dA(z) = \frac{1}{\pi} \, dx \, dy\) denotes the normalized Lebesgue area measure on \(\mathbb{D}\). 

Let $0 < p,q<\infty$ and $\alpha>- 2$ and $\eta \in \mathbb{T}$. The tent space $T_p^q(\alpha)$ is the space consisting  of all measurable functions $f$ on $\mathbb{D}$ such that 
\begin{align}\label{2.111}
	\|f\|_{T_p^q(\alpha)}=\left(\int_\mathbb{T}\left(\int_{\Gamma(\eta)}|f(z)|^p(1 - |z|^2)^{\alpha} d A(z)\right)^{\frac{q}{p}}|d \eta|\right)^{\frac{1}{q}}<\infty,
\end{align}
where the non-tangential region $\Gamma(\eta)$ is defined as follows:
$$
\Gamma(\eta)=\Gamma_\zeta(\eta)=\left\{z \in \mathbb{D}:|z - \eta|<\zeta(1 - |z|^2), \zeta>\frac{1}{2}\right\}.
$$  
The aperture $\zeta$ of the non-tangential region $\Gamma_{\zeta}(\eta)$ is not explicitly denoted. This is due to the fact that for any two distinct apertures, the resulting function spaces are identical when equipped with equivalent quasinorms. 
We adopt the notation \(AT_p^q(\alpha)\) to denote the tent space of analytic functions, defined formally as \(A T_p^q(\alpha)=T_p^q(\alpha) \cap H(\mathbb{D})\). Notably, in the special case where \(q = p\), the space \(A T_p^p(\alpha)\) coincides with the weighted Bergman space \(A^p_{\alpha+1}\), establishing a fundamental connection to classical function spaces. 

Tent spaces, initially introduced by Coifman, Meyer, and Stein in \cite{cms}, serve as a pivotal tool in harmonic analysis. Their seminal framework provides a unified approach to investigating problems associated with canonical function spaces, including Hardy spaces and Bergman spaces. This construction has since become indispensable for establishing deep relationships between geometric structures and analytic properties in function theory.

Let $\mathbb{N}$ represent the set of positive integers. Consider $g \in H(\mathbb{D})$, $n \in \mathbb{N}$, and $k \in \mathbb{N} \cup \{0\}$ such that $0 \leq k < n$. The generalized integration operator $T_g^{n,k}$ is formally defined by  
\[
T_g^{n,k}f(z) := I^n \left( f^{(k)}(z) \cdot g^{(n - k)}(z) \right), \quad f \in H(\mathbb{D}),
\]  
where $I^n$ denotes the $n$-th iteration of the integration operator $I$, with $If(z) := \int_{0}^{z} f(t) \, dt$ specifying the base integration operation.   This operator $T_g^{n,k}$ was first introduced by Chalmoukis \cite{ch}. In the degenerate case $n = 1$, $k = 0$, it reduces to the classical form  
\[
T_g^{1,0}f(z) = \int_{0}^{z} f(w) g'(w) \, dw =: T_g f(z),
\]  
which recovers the Volterra integration operator initially studied by Pommerenke \cite{p} in 1977. Pommerenke established the fundamental result that $T_g$ is bounded on  $H^2$ if and only if $g$ belongs to the space $BMOA$ of bounded mean oscillation analytic functions. In 1995, Aleman and Siskakis \cite{as1} extended this characterization to the full range $p \geq 1$, proving that $T_g$ is bounded on $H^p$ precisely when $g \in BMOA$.

Chalmoukis \cite{ch} studied the boundedness of the generalized integration operator \(T_{g}^{n,k}\) on Hardy spaces. His results show that \(T_g^{n,k}\), as a generalization of \(T_g\), exhibits a distinctly different behavior. For example, he proved that \(T_g^{n,k}: H^p\rightarrow H^p\) is bounded if and only if \(g\in\mathcal{B}\) for \(k\geq1\). Recall that the   Bloch space \( \mathcal{B} \) is defined as the set of all functions \( f \in H(\mathbb{D}) \) for which
\[
\|f\|_{\mathcal{B}} = |f(0)| + \sup_{z \in \mathbb{D}} (1 - |z|^2) |f'(z)| < \infty.
\] 
 In \cite{dlq}, Du, Li and Qu characterized the boundedness, compactness, and Schatten class membership of the generalized integration operator \(T_g^{n,k}\) on Bergman spaces with doubling weights. This line of study highlights the deep connection between operator theory and function spaces in complex analysis.  For further studies on the operators \(T_g\) and \(T_g^{n,k}\), see \cite{dlq,mppw,hyl,lly,llw,zz,w2} and the references therein.

Recently, the investigation into the closed  range of the operator \( T_g \) acting on diverse function spaces has become an area of interest. This study particularly focuses on characterizing the structural properties and analytical conditions under which the range of \( T_g \) maintains closedness.  For one-to-one operators on quasi-Banach space $(X,\|\cdot\|)$, the property of having closed range is equivalent to being bounded below.  Recall that a linear operator $T$ on a quasi-Banach space $(X,\|\cdot\|)$ is said to be bounded below if there exists $C>0$ such that $$\|T y\| \geq C\|y\|$$ for all $y \in X$.  In \cite{an}, Anderson showed that the integration operator \(T_g\) never has  closed range on \(H^2\), \(\mathcal{B}\), or \(BMOA\).
However, it can have closed range on weighted Bergman spaces. Chen  \cite{chen24}   proved that for the function space $F(p,p\alpha - 2,s)$ with the conditions $0<\alpha\leq1$, $s>0$, and $p>0$ such that $s + p\alpha>1$, the operator $T_g$ also fails to have a closed range. 
 See \cite{ag2,akfu,ham2020, m2023, caoh,liangliu, m2025,et,liuli2016} for more results about the closed range of integral operators and composition operators. 
 
In this paper, we   study   the closed range of the operator \(T_g^{n,k}\) on the analytic tent spaces \(AT_p^q(\alpha)\). The main result can be   stated as follows.

{\bf Main Theorem} \label{am} 
{\it Let $1 \leq p, q<\infty,\a>-2$ and $g \in \mathcal{B}$, $n\in\N$ and $k\in\N\cup\{0\}$ such that $0\le  k< n$.
For $c>0$, let $$G_c^{n,k}=\left\{z \in \mathbb{D}:|g^{(n-k)}(z)|(1-|z|^2)^{n-k}>c\right\}.$$
The following statements are equivalent:

\begin{enumerate}
\item [(i)]  $T_g^{n,k}: AT_p^q(\a)/\Upsilon_k \rightarrow AT_p^q(\a)/\Upsilon_k$ has closed range.

\item [(ii)] For all \( a \in \mathbb{D} \), there exist \( \delta > 0 \) and \( 0 < \rho < 1 \) such that
\[
A(G_c^{n,k} \cap \Delta(a, \rho)) \geq \delta A(\Delta(a, \rho)).
\]

\item [(iii)] For all \( a \in \mathbb{D} \), there exist \( \delta > 0 \) and \( 0 < \rho < 1 \) such that
\[
A(G_c^{n,k} \cap E(a, \rho)) \geq \delta A(E(a, \rho)).
\]
\end{enumerate}
Here $\Upsilon_0=\emptyset$, $ \Upsilon_k=\bigcup_{i = 0}^{k - 1}P_i, ~~~~ k=1,2,3,\cdot\cdot\cdot, $  \(P_i\) is the set of all polynomials with degree \(i\). }

 The paper is organized as follows. In Section 2, several lemmas are introduced. In Section 3, we give a detail proof for the   main result  of this paper.

Throughout this paper,    we write \(E\lesssim F\) to denote the existence of a positive constant \(C\) such that \(E\leq CF\). Furthermore, \(E\asymp F\) indicates that both \(E\lesssim F\) and \(F\lesssim E\) are satisfied.

\section{Preliminaries} 
In this section, we state some notations and lemmas, which will be used in the proof of main theorem.

\begin{Lemma}\cite[Lemma 2.5]{of}\label{2.5}
If $s>-1$,$r,t>0$, $r+t-s>2$ and  $t>s+2>r$, then 
$$
\int_{\D}\frac{(1-|w|^2)^s}{|1-\ol{b}w|^r|1-\ol{z}w|^t}dA(w)\lesssim\frac{1}{|1-\ol{b}z|^r(1-|z|^2)^{t-s-2}}
$$
for all $b,z\in\D$.
\end{Lemma}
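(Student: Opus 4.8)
The plan is to reduce this two–point estimate to the classical one–point Forelli--Rudin estimate
\[
\int_{\D}\frac{(1-|w|^2)^s}{|1-\ol{z}w|^t}\,dA(w)\asymp (1-|z|^2)^{s+2-t}\qquad(t>s+2),
\]
together with its ``bounded'' counterpart $\int_{\D}(1-|w|^2)^s|1-\ol{b}w|^{-r}\,dA(w)\asymp 1$, valid for $r<s+2$; both are standard and I would simply quote them. The only geometric inputs needed are the quasi–triangle inequality $|1-\ol{b}z|\le C(|1-\ol{b}w|+|1-\ol{z}w|)$, which holds for all $b,z,w\in\D$ with an absolute constant $C$, and the elementary bound $|1-\ol{b}z|\ge\tfrac12(1-|z|^2)$.

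First I would split the disk according to whether $w$ is near $b$, fixing the threshold with the constant $C$: set $\D=\Omega_1\cup\Omega_2$ where $\Omega_1=\{w:|1-\ol{b}w|\le\frac{1}{2C}|1-\ol{b}z|\}$ and $\Omega_2=\D\setminus\Omega_1$. On $\Omega_2$ one has $|1-\ol{b}w|^{-r}\lesssim|1-\ol{b}z|^{-r}$, so this factor pulls out of the integral and, after enlarging the domain to all of $\D$, the one–point estimate with exponent $t>s+2$ gives exactly $|1-\ol{b}z|^{-r}(1-|z|^2)^{s+2-t}$. This region produces the main term and is where the hypothesis $t>s+2$ is used.

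The work is in $\Omega_1$. Here $|1-\ol{b}w|$ is small, so the quasi–triangle inequality (applied in both directions) forces $|1-\ol{z}w|\asymp|1-\ol{b}z|$ throughout $\Omega_1$; hence the $\ol{z}$–factor is essentially constant and pulls out as $|1-\ol{b}z|^{-t}$. It then remains to estimate the localized single–variable integral $\int_{\{|1-\ol{b}w|\le R\}}(1-|w|^2)^s|1-\ol{b}w|^{-r}\,dA(w)$ with $R\asymp|1-\ol{b}z|$, and I would show it is $\lesssim R^{s+2-r}$ by a dyadic decomposition into annuli $\{2^{-j-1}R<|1-\ol{b}w|\le 2^{-j}R\}$, $j\ge 0$: on each annulus $|1-\ol{b}w|\asymp 2^{-j}R$, the weighted area integral of $(1-|w|^2)^s$ contributes $\asymp(2^{-j}R)^{s+2}$ (this uses $s>-1$ and the fact that $1-|w|^2\lesssim|1-\ol{b}w|$), and summing the resulting geometric series converges precisely because $s+2-r>0$. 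Combining, $\Omega_1$ contributes $\lesssim|1-\ol{b}z|^{-(t-s-2+r)}$, and the inequality $|1-\ol{b}z|\gtrsim 1-|z|^2$ together with $t-s-2>0$ shows this is dominated by the target $|1-\ol{b}z|^{-r}(1-|z|^2)^{s+2-t}$. (Note that when $R<1-|b|$ the set $\Omega_1$ is empty, so only nonempty annuli occur and the sum is harmless.)

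I expect the main obstacle to be the $\Omega_1$ analysis: one must verify that this ``near $b$'' region is genuinely \emph{subordinate} to the main term rather than merely bounded, which is exactly why the crude bound $\int_{\D}(1-|w|^2)^s|1-\ol{b}w|^{-r}\,dA\asymp 1$ is too weak and the sharper localized estimate $R^{s+2-r}$ is needed. Once this is in place the roles of the hypotheses are transparent: $t>s+2$ drives the decay that produces the $(1-|z|^2)^{s+2-t}$ factor, $s+2>r$ is exactly the convergence condition for both the bounded one–point integral and the dyadic sum, and $s>-1$ makes the area integrals finite; the remaining assumptions $r+t-s>2$ and $t>r$ follow automatically from these.
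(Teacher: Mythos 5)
Your proof is correct, but note that the paper does not prove this lemma at all: it is quoted verbatim from Ortega--F\`abrega \cite[Lemma 2.5]{of}, so there is no internal argument to compare against. Your reconstruction is essentially the standard proof of such two-point Forelli--Rudin estimates, and all the key steps check out: the quasi-triangle inequality holds with an absolute constant (e.g.\ because $|1-\ol{u}v|^{1/2}$ is a metric on $\ol{\D}$), so on $\Omega_1=\{|1-\ol{b}w|\le\frac{1}{2C}|1-\ol{b}z|\}$ one indeed gets $|1-\ol{z}w|\asymp|1-\ol{b}z|$ (only the lower bound is actually needed); on $\Omega_2$ the classical one-point estimate with $t>s+2$ yields the main term exactly; and your localized bound
$$
\int_{\{w\in\D:\,|1-\ol{b}w|\le R\}}\frac{(1-|w|^2)^s}{|1-\ol{b}w|^{r}}\,dA(w)\lesssim R^{s+2-r}
$$
is the genuinely necessary ingredient, proved correctly by dyadic annuli using the Carleson-box estimate $\int_{\{|1-\ol{b}w|\le\rho\}}(1-|w|^2)^s\,dA\lesssim\rho^{s+2}$ for $s>-1$ and geometric summation under $s+2>r$. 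Two small remarks: on each annulus the weighted area integral should be asserted only as $\lesssim(2^{-j}R)^{s+2}$, not $\asymp$ (annuli may be empty or meet $\D$ in a thin set), which is all you use anyway; and your closing observation is right that the hypotheses $r+t-s>2$ and $t>r$ are redundant, since $t>s+2$ and $r>0$ give $r+t-s>2+r>2$, while $t>s+2>r$ gives $t>r$ directly. The final comparison of the $\Omega_1$ contribution $|1-\ol{b}z|^{s+2-r-t}$ with the target via $1-|z|^2\le 2|1-\ol{b}z|$ and $t-s-2>0$ is also correct, so the argument is complete as written.
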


\begin{Lemma}\cite[Lemma 4.1]{mj} \label{inn}
Let  $0<p<\infty$ and $b\in\D$. Then, as $|b| \rightarrow 1$,
\begin{equation*}
\int_{0}^{2\pi}\frac{d\t}{|1-\ol{b}e^{i\t}|^p}
\asymp\left\{ 
\begin{aligned}
&1 &\text {if} ~& ~~0<p<1,\\
&\log\frac{1}{(1-|b|^2)}, &\text {if} ~&~~ p=1, \\
&\frac{1}{(1-|b|^2)^{p-1}} &\text {if} ~&~~ p>1.
\end{aligned}
\right.
\end{equation*}
The constants involved just depend on the parameter $p$.
\end{Lemma}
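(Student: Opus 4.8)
The plan is to reduce the integral to a single elementary one-dimensional integral and then split into three cases according to the size of $p$. First I would exploit rotational invariance: writing $\ol{b}=|b|e^{-i\arg b}$ and applying the substitution $\t\mapsto\t+\arg b$ (which preserves the measure $d\t$ and the periodicity), the integral $\int_0^{2\pi}|1-\ol{b}e^{i\t}|^{-p}\,d\t$ equals the same integral with $b$ replaced by the real number $r:=|b|\in[0,1)$. This lets me invoke the elementary identity
\[
|1-re^{i\t}|^2=(1-r)^2+4r\sin^2(\t/2),
\]
which exhibits the singularity at $\t=0$ explicitly.

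Next I would linearize. On $[-\pi,\pi]$ one has $\sin^2(\t/2)\asymp\t^2$ with absolute constants, and since we are taking $|b|\to1$ we may assume $r$ is bounded away from $0$, so that $4r\asymp1$. Setting $\d:=1-r$, these two facts give the two-sided comparison $|1-re^{i\t}|^2\asymp\d^2+\t^2$, uniform in $r$ near $1$. Using evenness and periodicity to pass from $[0,2\pi]$ to $2\int_0^\pi$, the problem then reduces to estimating $\int_0^\pi(\d^2+\t^2)^{-p/2}\,d\t$. The scaling substitution $\t=\d u$ converts this into
\[
\d^{1-p}\int_0^{\pi/\d}\frac{du}{(1+u^2)^{p/2}},
\]
so everything hinges on the growth of $J(\d):=\int_0^{\pi/\d}(1+u^2)^{-p/2}\,du$ as $\d\to0^+$.

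Finally I would separate the three regimes according to the integrability of $(1+u^2)^{-p/2}$ at infinity. When $p>1$ the improper integral $\int_0^\infty(1+u^2)^{-p/2}\,du$ converges, so $J(\d)$ tends to a finite positive constant and the estimate is $\asymp\d^{1-p}$. When $0<p<1$ the tail diverges like $u^{-p}$, so $J(\d)\asymp(\pi/\d)^{1-p}\asymp\d^{p-1}$ and the product $\d^{1-p}J(\d)\asymp1$. The borderline $p=1$ is precisely where $J(\d)=\operatorname{arcsinh}(\pi/\d)\asymp\log(1/\d)$, yielding $\asymp\log(1/\d)$. In each case I would conclude by observing that $\d=1-|b|\asymp1-|b|^2$ as $|b|\to1$, which matches the three stated expressions.

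I expect no serious obstacle, as the argument is entirely elementary; the only points requiring care are ensuring that the comparisons $\sin^2(\t/2)\asymp\t^2$ and $4r\asymp1$ produce constants that are uniform in $r$ as $r\to1$ (so the final $\asymp$ is genuinely two-sided), and treating the logarithmic case $p=1$ on its own rather than attempting to recover it as a limit of the power-law cases.
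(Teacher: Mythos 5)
Your proposal is correct and complete: the reduction by rotation to $b=r\in[0,1)$, the identity $|1-re^{i\theta}|^2=(1-r)^2+4r\sin^2(\theta/2)$, the uniform comparison $|1-re^{i\theta}|^2\asymp\delta^2+\theta^2$ for $r\ge\frac12$ with $\delta=1-r$, and the scaling $\theta=\delta u$ reduce everything to the tail behaviour of $\int_0^{\pi/\delta}(1+u^2)^{-p/2}\,du$, and your three-case analysis (convergent tail for $p>1$, power growth $\delta^{p-1}$ for $0<p<1$, and $\operatorname{arcsinh}(\pi/\delta)\asymp\log(1/\delta)$ at the borderline $p=1$) together with $1-|b|\asymp1-|b|^2$ yields exactly the stated asymptotics, with constants depending only on $p$. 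Note, however, that the paper contains no proof of this lemma to compare against: it is quoted verbatim from Mashreghi's book \cite{mj}. The classical proof in that literature (see also Lemma 3.10 in Zhu's \emph{Operator Theory in Function Spaces} \cite{z1}) runs differently: one expands $(1-\ol{b}e^{i\theta})^{-p/2}$ in its binomial series and applies Parseval's identity to rewrite the integral as $2\pi\sum_{k\ge0}\bigl(\tfrac{\Gamma(k+p/2)}{k!\,\Gamma(p/2)}\bigr)^2|b|^{2k}$, after which Stirling's formula gives $\asymp\sum_{k\ge0}(k+1)^{p-2}|b|^{2k}$, whose three asymptotic regimes as $|b|\to1$ are precisely the stated cases. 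Your real-variable route is more elementary and makes the singularity at $\theta=0$ completely transparent; the series route is the one that generalizes smoothly to the weighted Forelli--Rudin estimates over the disk and ball (of which Lemma \ref{2.5} is a relative). The two points you flag for care --- uniformity of $\sin^2(\theta/2)\asymp\theta^2$ and of $4r\asymp1$ as $r\to1$, and treating $p=1$ separately rather than as a limit --- are indeed the only places where the argument could silently lose two-sidedness, and you handle both correctly.
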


For $0<\tau<1$,  the cone-like region is defined as
$$
\Gamma_{\tau}(\eta)=\{z\in\D:|z|<\tau\}\cup \bigcup_{|z|<\tau}[z,\eta).
$$
It is of great importance that in $(\ref{2.111})$ we can use any of the cone-like regions. 
In other words, we have 
$$
\|f\|_{T_p^q(\alpha)}\asymp\left(\int_\mathbb{T}\left(\int_{\Gamma_{\tau}(\eta)}|f(z)|^p(1-|z|^2)^{\a} d A(z)\right)^{\frac{q}{p}}|d \eta|\right)^{\frac{1}{q}}.
$$
This is valid due to the following technical lemma.

\begin{Lemma}\cite[Lemma 4]{ar}\label{2.3}
Let $0<p, q<\infty$ and $\lambda>\max \left\{1, \frac{p}{q}\right\}$. Then there are constants $M_1=M_1(p, q, \lambda, K)$ and $M_2=M_2(p, q, \lambda, K)$ such that
$$
M_1 \int_{\mathbb{T}} \mu(\Gamma_K(\eta))^{\frac{q}{p}}|d \eta| \leq \int_{\mathbb{T}}\left(\int_{\mathbb{D}}\left(\frac{1-|z|^2}{|1-z \ol{\eta}|}\right)^\lambda d \mu(z)\right)^{\frac{q}{p}}|d \eta| \leq M_2 \int_{\mathbb{T}} \mu(\Gamma_K(\eta))^{\frac{q}{p}}|d \eta|
$$
for every positive measure $\mu$ on $\mathbb{D}$. Here $\Gamma_K(\eta)$ stands for any of the $\Gamma_\tau(\eta)$ or $\Gamma_\zeta(\eta)$. 
\end{Lemma}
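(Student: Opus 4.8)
The plan is to establish the two displayed inequalities separately; the left one is elementary, while the right one carries the whole difficulty. Throughout put $s=q/p$ and recall the identity $|1-z\overline{\eta}|=|z-\eta|$, valid for $\eta\in\mathbb{T}$, which turns the kernel into $\big(\tfrac{1-|z|^{2}}{|z-\eta|}\big)^{\lambda}$. For the lower bound (the constant $M_1$), I would note that when $z\in\Gamma_K(\eta)$ one has $|z-\eta|\lesssim 1-|z|^{2}$, so the kernel is bounded below by a positive constant $c_K=c_K(\lambda)$ there. Restricting the defining integral of the middle term to $\Gamma_K(\eta)$ then gives the pointwise inequality
\[
\int_{\mathbb{D}}\Big(\tfrac{1-|z|^{2}}{|1-z\overline{\eta}|}\Big)^{\lambda}d\mu(z)\ \ge\ c_K\,\mu(\Gamma_K(\eta)),
\]
and raising to the power $s$ and integrating $|d\eta|$ over $\mathbb{T}$ produces the left inequality. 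This direction needs no restriction on $\lambda$.

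For the upper bound I would decompose the disk into dyadic aperture shells seen from $\eta$. Writing $S_m(\eta)=\{z\in\mathbb{D}:2^{m}\le |1-z\overline{\eta}|/(1-|z|^{2})<2^{m+1}\}$ for $m\ge 0$ (so that $\Gamma_K(\eta)$ is absorbed into $S_0$), the kernel is $\asymp 2^{-m\lambda}$ on $S_m(\eta)$, while $S_m(\eta)\subset \Gamma_{2^{m+1}K}(\eta)$, the cone with aperture enlarged by the factor $2^{m+1}$. Summing over the shells yields the pointwise bound
\[
\int_{\mathbb{D}}\Big(\tfrac{1-|z|^{2}}{|1-z\overline{\eta}|}\Big)^{\lambda}d\mu(z)\ \lesssim\ \sum_{m\ge 0}2^{-m\lambda}\,\mu\big(\Gamma_{2^{m+1}K}(\eta)\big).
\]
It then remains to take the $L^{s}(\mathbb{T})$ quasinorm of both sides and sum the series. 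I would treat $s\le 1$ by the subadditivity of $t\mapsto t^{s}$ and $s>1$ by Minkowski's inequality, in each case reducing matters to a change-of-aperture estimate of the form $\int_{\mathbb{T}}\mu(\Gamma_{\beta K}(\eta))^{s}|d\eta|\lesssim \beta^{\max(1,s)}\int_{\mathbb{T}}\mu(\Gamma_K(\eta))^{s}|d\eta|$ for $\beta\ge 1$.

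The crux, and the step I expect to be the main obstacle, is to push these two reductions through with the correct power of $\beta$. With the above change-of-aperture bound the residual series is $\sum_{m}2^{m(1-\lambda s)}$ when $s\le 1$ and $\sum_{m}2^{m(1-\lambda)}$ when $s>1$; these converge precisely when $\lambda>1/s$ and $\lambda>1$ respectively, that is, exactly under the hypothesis $\lambda>\max\{1,p/q\}$. Thus establishing the change-of-aperture inequality with its sharp linear (respectively $\beta^{s}$) constant --- a standalone statement about cone measures that I would prove by a Whitney/Vitali covering of the wide cone by translates of the narrow one, and which is therefore not circular with the lemma --- is the heart of the argument. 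As a by-product this confirms the remark preceding the lemma that the middle quantity is insensitive to the choice of $\Gamma_\tau$ or $\Gamma_\zeta$, since the balayage term does not see the aperture.
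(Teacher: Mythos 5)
Your statement is one the paper does not prove at all: it is imported verbatim as \cite[Lemma 4]{ar} (Arsenovi\'c), so the only comparison available is with the standard proof of this balayage estimate. Your overall architecture is the standard one and the bookkeeping is right: the lower bound is indeed trivial (the kernel is bounded below on $\Gamma_K(\eta)$, no condition on $\lambda$ needed); the dyadic shell decomposition $S_m(\eta)$ gives the pointwise bound by $\sum_m 2^{-m\lambda}\mu(\Gamma_{2^{m+1}K}(\eta))$ (minor repair: since $|1-z\overline{\eta}|\ge 1-|z|\ge\tfrac12(1-|z|^2)$, the shells should start at $m=-1$, and for the cone-like regions $\Gamma_\tau$ the ``aperture dilation'' has to be routed through the sandwiching by $\Gamma_\zeta$ regions); and your change-of-aperture exponents $\beta^{\max(1,s)}$ at the level of the $s$-th power integrals, $s=q/p$, are exactly the classical Coifman--Meyer--Stein tent-space bounds for $T^p_1$ (norm growth $\beta^{\max(1,1/s)}$), which makes the residual series converge precisely under $\lambda>\max\{1,p/q\}$. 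You correctly identified the change-of-aperture inequality as the crux and as the only non-elementary ingredient.

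The genuine gap is in your proposed proof of that crux. A ``Whitney/Vitali covering of the wide cone by translates of the narrow one'' cannot work pointwise: at height $h=1-|z|$ the wide cone $\Gamma_{\beta K}(\eta)$ has angular width $\asymp\beta h$ while each narrow cone covers angular width $\asymp h$, so one needs $\asymp\beta$ narrow cones \emph{at every scale $h$}, with vertices at angular distance up to $\beta h$ from $\eta$ --- i.e.\ the vertices must move with the scale, and no finite family of translates covers $\Gamma_{\beta K}(\eta)$ near its vertex. The honest substitute is an averaging argument over vertices: by Fubini, $z$ lies in $\Gamma_K(\xi)$ for $\xi$ in an arc of length $\asymp 1-|z|$, which proves the case $s=1$ with constant $\asymp\beta$ but, for general $s$, inevitably produces the weight $(1-|z|)$ and forces the level-set machinery (density points of $\{\eta:\mu(\Gamma_K(\eta))>t\}$ together with the weak $(1,1)$ maximal bound, as in CMS Section 6, or in Arsenovi\'c's own argument). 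Until you either supply that argument or cite the $T^p_1$ change-of-aperture theorem, your proof is incomplete exactly at the step carrying all the difficulty; note also that you cannot instead invoke aperture-insensitivity of the middle balayage quantity, since in this paper that insensitivity is a \emph{consequence} of the present lemma, so doing so would be circular --- the standalone covering lemma you promised to break the circularity is precisely what is missing.
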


\begin{Lemma}\label{Tfn}\cite[Theorem 2]{pa}
	Let $0<p,q<\infty$, $\a>-2$ and $n\in\N\cup\{0\}$. Then $f\in AT_p^q(\a)$ if and only if
	$$
	\int_{\mathbb{T}}\left(\int_{\Gamma{(\eta)}}|f^{(n)}(z)|^{p}(1-|z|^2)^{np+\a} dA(z)\right)^{\frac{q}{p}}|d\eta|<\infty.
	$$
\end{Lemma}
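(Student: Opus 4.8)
The plan is to prove the stronger norm equivalence
\begin{equation*}
\|f\|_{T_p^q(\alpha)}\asymp\sum_{j=0}^{n-1}|f^{(j)}(0)|+\left(\int_{\mathbb{T}}\left(\int_{\Gamma(\eta)}|f^{(n)}(z)|^{p}(1-|z|^2)^{np+\alpha}\,dA(z)\right)^{\frac{q}{p}}|d\eta|\right)^{\frac{1}{q}},
\end{equation*}
which immediately yields the stated membership equivalence, since the polynomial part $\sum_{j<n}|f^{(j)}(0)|$ is finite and every polynomial lies in $AT_p^q(\alpha)$. By induction it suffices to treat $n=1$: the $n=1$ assertion, applied to $f$ with weight $\alpha$, says precisely that $f\in AT_p^q(\alpha)$ if and only if $f'\in AT_p^q(p+\alpha)$; iterating this $n$ times raises the order of the derivative from $0$ to $n$ and shifts the weight from $\alpha$ to $np+\alpha$, while all intermediate weights $jp+\alpha$ remain $>-2$ since $\alpha>-2$ and $p>0$. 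Thus everything reduces to the two inequalities contained in the case $n=1$.

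For the inequality ``$f$ controls $f'$'' I would invoke the standard subharmonicity and Cauchy estimate for analytic functions: for a fixed small $\rho\in(0,1)$,
\begin{equation*}
|f'(z)|^{p}(1-|z|^2)^{p}\lesssim\frac{1}{(1-|z|^2)^{2}}\int_{E(z,\rho)}|f(w)|^{p}\,dA(w),\qquad z\in\mathbb{D}.
\end{equation*}
Multiplying by $(1-|z|^2)^{\alpha}$, integrating over $\Gamma(\eta)$, and applying Fubini together with the comparabilities $(1-|z|^2)\asymp(1-|w|^2)$ and $A(E(z,\rho))\asymp(1-|z|^2)^{2}$ for $w\in E(z,\rho)$, one bounds the inner $f'$-integral by the inner integral of $|f(w)|^{p}(1-|w|^2)^{\alpha}$ taken over a slightly wider cone. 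Raising to the power $q/p$ and integrating in $\eta$, the enlargement of the aperture is absorbed by the aperture-independence of the tent quasinorm, which is exactly the content of Lemma \ref{2.3}. This gives one direction.

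For the reverse inequality ``$f'$ controls $f$'' I would use a Bergman-type reproducing representation expressing $f$, modulo the constant $f(0)$, as an integral of $f'$: for $\beta$ sufficiently large,
\begin{equation*}
f(z)=f(0)+\int_{\mathbb{D}}\frac{(1-|w|^2)^{\beta}}{(1-\overline{w}z)^{\gamma}}\,\Phi(z,w)\,f'(w)\,dA(w),\qquad |\Phi(z,w)|\lesssim 1,
\end{equation*}
with exponents $\beta,\gamma$ at our disposal. Converting $\|f\|_{T_p^q(\alpha)}$ into the Carleson-type form of Lemma \ref{2.3}, the inner integral in the space variable produces, once the representation kernel is inserted, a product of two factors $|1-\overline{w}z|^{-\gamma}$ and $\big((1-|z|^2)/|1-z\overline{\eta}|\big)^{\lambda}$; the resulting integral is controlled by Lemma \ref{2.5}, whose hypotheses $s>-1$, $r+t-s>2$, $t>s+2>r$ pin down the admissible range of $\beta,\gamma$, while the leftover boundary integrals in $\eta$ are handled by Lemma \ref{inn}. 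The outer $L^{q/p}(\mathbb{T})$ integration is treated by Minkowski's inequality when $q\ge p$ and by the elementary subadditivity $\big(\sum a_j\big)^{q/p}\le\sum a_j^{q/p}$ when $q<p$.

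The main obstacle is precisely this reverse direction: one must choose the representation parameters $\beta,\gamma$ so that Lemma \ref{2.5} delivers exactly the target weight $(1-|z|^2)^{\alpha}$ on the function side, and simultaneously keep the $\eta$-integration convergent in both regimes $q\ge p$ and $q<p$. Once the kernel estimate of Lemma \ref{2.5}, the boundary estimate of Lemma \ref{inn}, and the aperture-independence of Lemma \ref{2.3} are combined with the correct bookkeeping of exponents, the two inequalities together establish the norm equivalence and hence the claimed derivative characterization of $AT_p^q(\alpha)$.
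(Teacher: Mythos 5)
First, a point of comparison: the paper itself contains no proof of this lemma --- it is quoted verbatim from Per\"al\"a \cite[Theorem~2]{pa} --- so your attempt can only be measured against the standard argument. Your overall plan (induct down to $n=1$; prove that $f$ controls $f'$ by subharmonicity, Fubini, and aperture enlargement; prove that $f'$ controls $f$ by an integral representation combined with the kernel estimates of Lemmas~\ref{2.5}, \ref{inn} and \ref{2.3}) is indeed the standard route, and your first direction is essentially complete: the Cauchy--subharmonicity estimate, Fubini with Lemma~\ref{222}, the comparabilities $(1-|z|^2)\asymp(1-|w|^2)$ and $A(E(z,\rho))\asymp(1-|z|^2)^2$, and aperture independence close that half of the argument.

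The reverse direction, however, has a genuine gap. You discuss only the \emph{outer} exponent $q/p$ (Minkowski versus subadditivity), but the real bifurcation occurs at the \emph{inner} exponent $p$: after inserting $|f(z)|\lesssim|f(0)|+\int_{\mathbb{D}}(1-|w|^2)^{\beta}|1-\overline{w}z|^{-\gamma}|f'(w)|\,dA(w)$, you must raise this integral to the $p$-th power inside the cone integral before Fubini and Lemma~\ref{2.5} can act. For $p\geq 1$ this requires a H\"older/Schur-type splitting of the kernel (with Lemma~\ref{2.5} or Lemma~\ref{inn} controlling the auxiliary factor); for $0<p<1$ no Minkowski-type inequality is available at all, and one must exploit the analyticity of $f'$, e.g.\ via the Luecking-type inequality $\left(\int_{\mathbb{D}}\frac{(1-|w|^2)^{\beta}|f'(w)|}{|1-\overline{w}z|^{\gamma}}\,dA(w)\right)^{p}\lesssim\int_{\mathbb{D}}\frac{(1-|w|^2)^{\beta p+2p-2}|f'(w)|^{p}}{|1-\overline{w}z|^{\gamma p}}\,dA(w)$, which rests on the sub-mean-value property of $|f'|^{p}$; neither mechanism appears in your outline. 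In addition, the representation formula with $|\Phi(z,w)|\lesssim 1$ is asserted rather than derived (it does follow from the weighted Bergman reproducing formula for $f'$, or by integrating the standard formula along radii, but this must be written out), and you yourself leave the choice of $\beta,\gamma$ --- which you correctly identify as the crux --- unresolved. As it stands, the reverse inequality is a plausible plan rather than a proof, and precisely at the point where the known proofs do their real work.
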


According to Theorem 3.1 in  \cite{yhl}, we have the following lemma.
\begin{Lemma}
	Let $0< p, q<\infty,\a>-2$, $g \in \mathcal{B}$, $n\in\N$ and $k\in\N\cup\{0\}$ such that $0\le  k< n$. Then $T_g^{n,k}: AT_p^q(\a)\to AT_p^q(\a)$ is bounded if and only if $g\in\mathcal{B}$.
\end{Lemma}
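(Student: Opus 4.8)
The plan is to reduce the entire statement to the $n$-th derivative via Lemma \ref{Tfn}, which is the decisive structural observation. Since $T_g^{n,k}f$ is an $n$-fold primitive of $f^{(k)}g^{(n-k)}$, we have $(T_g^{n,k}f)^{(n)}=f^{(k)}\,g^{(n-k)}$, and hence, applying Lemma \ref{Tfn} with index $n$,
\[
\|T_g^{n,k}f\|_{AT_p^q(\a)}^q \asymp \int_{\TT}\Big(\int_{\Gamma(\eta)}|f^{(k)}(z)|^p\,|g^{(n-k)}(z)|^p(1-|z|^2)^{np+\a}\,dA(z)\Big)^{\frac{q}{p}}|d\eta|.
\]
This converts a statement about an integral operator into a weighted-multiplier estimate for the analytic functions $f^{(k)}$, against the weight $|g^{(n-k)}(z)|^p(1-|z|^2)^{np+\a}$.

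For the sufficiency direction, I would use the higher-order characterization of the Bloch space: $g\in\mathcal{B}$ is equivalent to $(1-|z|^2)^{n-k}|g^{(n-k)}(z)|\lesssim\|g\|_{\mathcal{B}}$, which applies since $n-k\ge1$. Substituting this into the display, the crucial point is the exponent identity $np+\a-(n-k)p=kp+\a$, which yields
\[
\|T_g^{n,k}f\|_{AT_p^q(\a)}^q \lesssim \|g\|_{\mathcal{B}}^q\int_{\TT}\Big(\int_{\Gamma(\eta)}|f^{(k)}(z)|^p(1-|z|^2)^{kp+\a}\,dA(z)\Big)^{\frac{q}{p}}|d\eta|,
\]
and the right-hand side is $\asymp\|f\|_{AT_p^q(\a)}^q$ by a second application of Lemma \ref{Tfn}, this time with index $k$. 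Thus $\|T_g^{n,k}\|\lesssim\|g\|_{\mathcal{B}}$, and this direction is routine once the derivative reduction is in place.

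For the necessity direction, I would argue by testing on a normalized family. For $a\in\D$ near $\TT$, set $f_a(z)=(1-|a|^2)^{\gamma}/(1-\ol{a}z)^{\beta}$ and choose $\gamma,\beta$ subject to $\gamma-\beta=-\tfrac{1}{q}-\tfrac{\a+2}{p}$ (with $\beta$ large), so that $\|f_a\|_{AT_p^q(\a)}\asymp1$ uniformly in $a$; this normalization is verified by computing the cone integral of $|f_a^{(k)}|^p(1-|z|^2)^{kp+\a}$ through Lemma \ref{2.3}, then Lemma \ref{2.5}, and finally Lemma \ref{inn}. Boundedness gives $\|T_g^{n,k}f_a\|_{AT_p^q(\a)}\lesssim\|T_g^{n,k}\|$. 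To bound the left side from below I would restrict the inner integral in the first display to $z\in\Delta(a,r)$ and the outer integral to the arc $\{\eta:\Delta(a,r)\subset\Gamma(\eta)\}$ of length $\asymp1-|a|^2$; on $\Delta(a,r)$ one has $|1-\ol{a}z|\asymp1-|a|^2\asymp1-|z|^2$, so $|f_a^{(k)}(z)|\asymp(1-|a|^2)^{\gamma-\beta-k}$, while subharmonicity of $|g^{(n-k)}|^p$ gives $|g^{(n-k)}(a)|^p\lesssim(1-|a|^2)^{-2}\int_{\Delta(a,r)}|g^{(n-k)}|^p\,dA$. Collecting the powers of $1-|a|^2$, all extraneous factors cancel thanks to the very relation defining $\gamma,\beta$, leaving $(1-|a|^2)^{n-k}|g^{(n-k)}(a)|\lesssim\|T_g^{n,k}\|$; letting $a$ vary gives $g\in\mathcal{B}$ by the higher-order Bloch characterization.

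The main obstacle is the necessity direction, and specifically the two tent-structure estimates: confirming that $f_a$ is normalized uniformly in $a$, and extracting the clean lower bound for $\|T_g^{n,k}f_a\|$ through the two-parameter (area times boundary) integral. It is not a coincidence that the normalization relation for $\gamma,\beta$ is exactly what makes the lower bound collapse to the Bloch expression; both reflect the same scaling bookkeeping that underlies the sufficiency estimate. Once Lemmas \ref{2.5}, \ref{inn}, and the Carleson-type Lemma \ref{2.3} are set up to compare the relevant kernel cone integrals to tent masses, the powers of $1-|a|^2$ cancel and the Bloch bound drops out. I would finally remark that the whole argument can equally be read off from \cite[Theorem 3.1]{yhl}, to which it reduces after the identity $(T_g^{n,k}f)^{(n)}=f^{(k)}g^{(n-k)}$.
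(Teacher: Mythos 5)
Your proposal is correct in outline, but there is nothing in the paper to compare it against: the paper does not prove this lemma at all, it simply quotes it from \cite[Theorem 3.1]{yhl} (``According to Theorem 3.1 in \cite{yhl}, we have the following lemma''). What you have done is reconstruct, essentially accurately, the standard proof that lives in that reference: the reduction $(T_g^{n,k}f)^{(n)}=f^{(k)}g^{(n-k)}$ combined with Lemma \ref{Tfn}, the higher-order Bloch characterization $(1-|z|^2)^{n-k}|g^{(n-k)}(z)|\lesssim\|g\|_{\mathcal{B}}$ (valid since $n-k\ge1$) together with the exponent identity $np+\a-(n-k)p=kp+\a$ for sufficiency, and the test-family $f_a$ with subharmonicity of $|g^{(n-k)}|^p$ on $\Delta(a,r)$ for necessity; your power count in the necessity step is right, since with $\gamma-\beta=-\tfrac{1}{q}-\tfrac{\a+2}{p}$ the collected exponent is $(\gamma-\beta-k)p+np+\a+2=(n-k)p-\tfrac{p}{q}$, and the outer arc of length $\asymp 1-|a|^2$ cancels the residual $-\tfrac{p}{q}$ after raising to $\tfrac{q}{p}$. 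Two small points deserve explicit mention. First, Lemma \ref{Tfn} as stated in the paper is only a finiteness criterion; the norm equivalence you use requires the quantitative version including the terms $\sum_{j=0}^{n-1}|f^{(j)}(0)|^q$. In your first display this is harmless precisely because $(T_g^{n,k}f)^{(j)}(0)=0$ for $0\le j\le n-1$ (a fact the paper itself invokes in the proof of the Main Theorem), and in the sufficiency step you only need the inequality bounding the $k$-th derivative integral by $\|f\|_{AT_p^q(\a)}^q$, which is the correct direction --- but you should say both things. Second, your estimate $|f_a^{(k)}(z)|\asymp(1-|a|^2)^{\gamma-\beta-k}$ on $\Delta(a,r)$ carries a hidden factor $|a|^k$ and degenerates as $a\to0$, so the testing argument yields $(1-|a|^2)^{n-k}|g^{(n-k)}(a)|\lesssim\|T_g^{n,k}\|$ only for, say, $|a|\ge\tfrac12$; the bound on the compact remainder is then automatic from continuity of $g^{(n-k)}$. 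Finally, note that the lemma's hypothesis ``$g\in\mathcal{B}$'' is redundant as stated (an evident slip in the paper, since $g\in\mathcal{B}$ is also the conclusion); your proof correctly establishes the equivalence without assuming it, which is the form actually needed.
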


\begin{Proposition}\label{3.5}
Let \(0 < p, q < \infty\) and \(\alpha > -2\). For any given \(\varepsilon > 0\), there exists \(0 < \rho < 1\) such that for all \(a \in \mathbb{D}\), there is a function \(f_a \in A T_p^q(\alpha)\)    such that the following two statements hold true.
\begin{enumerate}
\item [(i)]  There exist two positive constants $C'$ and $C''$ satisfying
   $$
C''\leq \left\|f_a\right\|_{AT_p^q(\alpha)}   \leq C'. $$

\item [(ii)]  $$\left\|f_a \chi_{\Delta^c(a, \rho)}\right\|_{AT_p^q(\alpha)} \leq \varepsilon.
$$
\end{enumerate}
Here $\Delta^c(a, \rho)= \DD \setminus \Delta(a, \rho) $.
\end{Proposition}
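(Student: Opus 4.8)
The plan is to exhibit an explicit reproducing‑kernel–type family and to reduce every estimate to Lemmas \ref{2.5}, \ref{inn} and \ref{2.3}. Put $\gamma=\frac{\alpha+2}{p}+\frac1q$, fix $\lambda>\max\{1,\frac pq\}$ and a large real $N$, and set
\[
f_a(z)=\frac{(1-|a|^2)^{N}}{(1-\bar a z)^{N+\gamma}},\qquad z\in\mathbb{D}.
\]
Each $f_a$ is analytic, so (i) will also yield $f_a\in AT_p^q(\alpha)$. The unifying device is Lemma \ref{2.3}: writing $d\mu_a(z)=|f_a(z)|^p(1-|z|^2)^{\alpha}\,dA(z)$, the inner cone integral in \eqref{2.111} equals $\mu_a(\Gamma(\eta))$, whence
\[
\|f_a\|_{AT_p^q(\alpha)}^q\asymp\int_{\mathbb{T}}\Big(\int_{\mathbb{D}}\Big(\tfrac{1-|z|^2}{|1-z\bar\eta|}\Big)^{\lambda}\,d\mu_a(z)\Big)^{q/p}|d\eta|,
\]
and the same comparison holds with $\mu_a$ replaced by its restriction to any Borel set. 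This converts both halves of (i) and all of (ii) into one Bergman‑type integral, to which Lemma \ref{2.5} applies; the role of $N$ large is exactly to push $t=(N+\gamma)p$ past the thresholds required there.

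For the upper bound in (i) I insert $|f_a|^p$ and apply Lemma \ref{2.5} with $s=\lambda+\alpha$, $r=\lambda$, $t=(N+\gamma)p$, getting an inner bound $\lesssim|1-\bar\eta a|^{-\lambda}(1-|a|^2)^{-\gamma p+\lambda+\alpha+2}$; the choice of $\gamma$ makes $-\gamma p+\alpha+2=-\frac pq$, so after raising to the power $q/p$ and integrating in $\eta$ by Lemma \ref{inn} (licit since $\lambda q/p>1$) the powers of $(1-|a|^2)$ cancel and $\|f_a\|^q\lesssim1$. For the matching lower bound I keep only $z\in\Delta(a,r_0)$ for a fixed small $r_0$: there $|1-\bar a z|\asymp1-|a|^2\asymp1-|z|^2$, $A(\Delta(a,r_0))\asymp(1-|a|^2)^2$ and $\frac{1-|z|^2}{|1-z\bar\eta|}\asymp\frac{1-|a|^2}{|1-a\bar\eta|}$ uniformly in $\eta$, so the inner integral is $\gtrsim\big(\frac{1-|a|^2}{|1-a\bar\eta|}\big)^{\lambda}(1-|a|^2)^{-\gamma p+\alpha+2}$; integrating in $\eta$ by Lemma \ref{inn} reproduces the identical cancellation and gives $\|f_a\|^q\gtrsim1$. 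Together these are (i), with constants independent of $a$.

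The crux is the uniform localization (ii). I use the Möbius identity $1-\big|\frac{a-z}{1-\bar a z}\big|^2=\frac{(1-|a|^2)(1-|z|^2)}{|1-\bar a z|^2}$, which on $\Delta^c(a,\rho)$ gives $\frac{(1-|a|^2)(1-|z|^2)}{|1-\bar a z|^2}\le1-\rho^2$. The idea is to extract a small factor while preserving the shape of the integrand: for $0<\theta<\alpha+2$ one has, on $\Delta^c(a,\rho)$,
\[
|f_a(z)|^p=\frac{(1-|a|^2)^{Np}}{|1-\bar a z|^{(N+\gamma)p}}\le(1-\rho^2)^{\theta}\,\frac{(1-|a|^2)^{Np-\theta}}{(1-|z|^2)^{\theta}\,|1-\bar a z|^{(N+\gamma)p-2\theta}}.
\]
Feeding the second factor into the displayed integral and applying Lemma \ref{2.5} with the shifted parameters $s=\lambda+\alpha-\theta$, $t=(N+\gamma)p-2\theta$ (admissible for small $\theta$ and large $N$), the $\theta$‑shifts cancel in the exponent of $(1-|a|^2)$, so the computation collapses to the bounded integral of the upper bound, now multiplied by $(1-\rho^2)^{\theta q/p}$. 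Hence $\|f_a\chi_{\Delta^c(a,\rho)}\|_{AT_p^q(\alpha)}\lesssim(1-\rho^2)^{\theta/p}$ with a constant independent of $a$, and choosing $\rho$ near $1$ makes this $\le\varepsilon$. I expect the main obstacle to be precisely this factorization: $\theta$ must be balanced against the admissibility window of Lemma \ref{2.5} while ensuring the surviving integral reproduces the normalization uniformly in $a$ rather than degenerating as $a\to\mathbb{T}$.
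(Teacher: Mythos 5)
Your proof is correct, and your test function is the paper's own in disguise: setting $\gamma_{\mathrm{paper}}=N+\frac{\alpha+2}{p}+\frac1q$, your $f_a$ coincides with the one in the paper's proof of Proposition \ref{3.5}, and your upper bound in (i) runs through exactly the same chain (insert the kernel $\bigl(\tfrac{1-|z|^2}{|1-z\bar\eta|}\bigr)^{\lambda}$ via Lemma \ref{2.3}, then Lemma \ref{2.5} with $s=\lambda+\alpha$, $r=\lambda$, then Lemma \ref{inn}), with your ``$N$ large'' playing the role of the paper's hypothesis $\gamma>\frac{\lambda+2\alpha+4}{p}$. You genuinely diverge in the other two estimates, and in both places your route is tidier. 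For the lower bound the paper reduces to $a\in[\tfrac12,1)$ and computes an explicit radial--angular integral over the cone near $a$; you instead restrict the measure to $\Delta(a,r_0)$ for a small fixed $r_0$, where $1-|z|^2\asymp1-|a|^2\asymp|1-\bar az|$ and $|1-z\bar\eta|\asymp|1-a\bar\eta|$ uniformly in $\eta$ (valid once $\tfrac{2r_0}{1-r_0}<1$), and let the two-sided estimate of Lemma \ref{2.3} finish the job --- no WLOG reduction, no cone geometry. For (ii) the paper performs the change of variables $z\mapsto\varphi_a(z)$ and tracks the kernel through the M\"obius map before the factor $(1-\rho)^{(\alpha+2)/p}$ appears; you extract the smallness pointwise from $1-|\varphi_a(z)|^2=\frac{(1-|a|^2)(1-|z|^2)}{|1-\bar az|^2}\le1-\rho^2$ on $\Delta^c(a,\rho)$ raised to a power $\theta$, at the cost of the slightly weaker (but fully sufficient) bound $(1-\rho^2)^{\theta/p}$. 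The parameter check you flag as the main risk is in fact unproblematic: the conditions $s=\lambda+\alpha-\theta>-1$ and $s+2>r=\lambda$ in Lemma \ref{2.5} reduce precisely to your constraint $\theta<\alpha+2$, while $t=(N+\gamma)p-2\theta$ clears every remaining threshold for $N$ large, and the exponent of $(1-|a|^2)$ still collapses to $\lambda-\frac pq$, so the surviving $\eta$-integral is bounded uniformly in $a$ exactly as in the unrestricted case.
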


\begin{proof}  $(i)$ Fix \(\lambda > \max \left\{1, \frac{p}{q}\right\}\). Set
\[
f_a(z) = \frac{(1 - |a|^2)^{\gamma- \frac{\alpha + 2}{p} - \frac{1}{q}}}{(1 - \ol{a} z)^\gamma}, \quad z \in \mathbb{D},
\]
where \(\gamma >\max \left\{  \frac{\alpha + 2}{p} +\frac{1}{q}, \frac{\lambda + 2\alpha + 4}{p}  \right\}\). 
Notice that for $z\in\Gamma(\eta)$, $|1-\ol{\eta}z|\asymp 1-|z|^2$.
Using Lemmas  \ref{2.5} and \ref{inn}, there exists a positive constant $C'$ such  that 
\begin{align*}
&\int_{\mathbb{T}}\left(\int_{\Gamma(\eta)} |f_a(z)|^p (1-|z|^2)^{\a}dA(z)\right)^{\frac{q}{p}}|d \eta|\\
=&(1-|a|^2)^{q\gamma-\frac{q(\a+2)}{p}-1}\int_{\mathbb{T}}\left(\int_{\Gamma(\eta)}  \frac{(1-|z|^2)^{\a}}{|1-\ol{a}z|^{p\gamma}}dA(z)\right)^{\frac{q}{p}}|d \eta|\\
\lesssim&(1-|a|^2)^{q\gamma-\frac{q(\a+2)}{p}-1}\int_{\mathbb{T}}\left(\int_{\D}  \frac{(1-|z|^2)^{\a+\lambda}}{|1-\ol{a}z|^{p\gamma}|1-\ol{\eta} z|^\lambda}dA(z)\right)^{\frac{q}{p}}|d \eta|\\
\lesssim&(1-|a|^2)^{q\gamma-\frac{q(\a+2)}{p}-1}\int_{\mathbb{T}}\left(\frac{1}{(1-|a|^2)^{p\gamma-(\a+2)-\lambda}|1-\ol{a}\eta|^\lambda}
\right)^{\frac{q}{p}}|d \eta|\\
\lesssim&(1-|a|^2)^{\frac{\lambda q}{p}-1}\int_{\TT}\frac{|d\eta|}{|1-\ol{a}\eta|^{\frac{\lambda q}{p}}}  \\
\leq& C'.
\end{align*}

On the other hand, without loss of generality, we may assume that $\frac{1}{2}\leq a <1$.
For $\eta=e^{i\theta}$, $z=re^{i\phi}\in\Gamma(\eta)$ satisfying
\[
|r e^{i\phi} - e^{i\theta}| < \zeta(1 - r^2).
\]
When \( r\to 1^{-} \) , this condition can be approximated as:  
$
|\phi - \theta| < C(1 - r),
$ 
where \( C \) is a constant. This shows that  
\begin{align*}
 &\int_{\Gamma(\eta)} \frac{(1 - r^2)^\alpha}{|1 - az|^{p\gamma}} \, dA(z) \\ 
=& \int_{\Gamma(e^{i\theta})} \frac{(1 - r^2)^\alpha}{|1 - ar e^{i\phi}|^{p\gamma}}  \frac{r}{\pi}  dr d\phi \\
\gtrsim&
\int_{a}^{1} \frac{(1 - r)^{\alpha+1}}{|1 - a r e^{i\theta}|^{p\gamma}}  dr.
\end{align*}
Here we used the length of the angular integration interval is equivalent to \((1 - r)\).
It is easy to check that $|1-are^{i\theta}|\leq \sqrt{2}(1-ar)$ when $0<\theta<1-a$ and $0\leq r<1$.
Using the fact that $1 - ar \asymp 1 - a$ for  $a < r < 1$,
there exists a positive constant $C''$ such  that
\begin{align*}
&\int_{\mathbb{T}}\left(\int_{\Gamma(\eta)} |f_a(z)|^p (1-|z|^2)^{\a}dA(z)\right)^{\frac{q}{p}}|d \eta|\\
=&(1-a^2)^{q\gamma-\frac{q(\a+2)}{p}-1}\int_{\mathbb{T}}\left(\int_{\Gamma(\eta)}  \frac{(1-|z|^2)^{\a}}{|1-az|^{p\gamma}}dA(z)\right)^{\frac{q}{p}}|d \eta|\\
\gtrsim&(1-a)^{q\gamma-\frac{q(\a+2)}{p}-1} 
\int_{0}^{2\pi} \left( \int_{a}^{1} \frac{(1-r)^{\alpha+1}}{|1-are^{i\theta}|^{p\gamma}} dr \right)^{\frac{q}{p}}  d\theta\\
\gtrsim&(1-a)^{q\gamma-\frac{q(\a+2)}{p}-1} 
\int_{0}^{1-a} \left( \int_{a}^{1} \frac{(1-r)^{\alpha+1}}{(1-ar)^{p\gamma}} dr \right)^{\frac{q}{p}}  d\theta\\
\asymp& (1-a)^{q\gamma-\frac{q(\a+2)}{p}-1} 
\int_{0}^{1-a} \left( \int_{a}^{1} \frac{(1-r)^{\alpha+1}}{(1-a)^{p\gamma}} dr \right)^{\frac{q}{p}}  d\theta\\
\asymp& (1-a)^{q\gamma-\frac{q(\a+2)}{p}-1} 
\int_{0}^{1-a}\frac{1}{(1-a)^{q\gamma-\frac{q(\alpha+2)}{p}}}d\theta\\
  \geq&  C''.
\end{align*}

$(ii)$ We will show that for any given \(\varepsilon > 0\), there exists \(0 < \rho < 1\) such that
$$
J=\left(\int_{\mathbb{T}}\left(\int_{\Gamma(\eta)} |f_a(z)|^p\chi_{\Delta^c(a, \rho)}(z) (1-|z|^2)^{\a}dA(z)\right)^{\frac{q}{p}}|d \eta|\right)^\frac{1}{q}  \leq \varepsilon.
$$
Note that for $z\in\Gamma(\eta)$, $|1-\ol{\eta}z|\asymp 1-|z|^2$. 
Thus,
$$
J\lesssim\left( \int_{\mathbb{T}}\left(\int_{\mathbb{D}}\left(\frac{1-|z|^2}{|1-\ol{\eta} z|}\right)^\lambda \left|f_a(z)\right|^p\chi_{\Delta^c(a, \rho)}(z)(1-|z|^2)^{\a}dA(z)\right)^{\frac{q}{p}}|d \eta| \right)^\frac{1}{q}.
$$
Using the change of variable $z \mapsto \varphi_a(z)$,
where $\varphi_a(z)=\frac{a-z}{1-\ol{a} z}$, we have
$$
\begin{aligned}
J & \lesssim \left(
\int_{\mathbb{T}}\left(\int_{\mathbb{D} \backslash \Delta(0, \rho)} \frac{(1-\left|\varphi_a(z)\right|^2)^{\lambda+\a}}{\left|1-\ol{\eta} \varphi_a(z)\right|^\lambda}\left|f_a\left(\varphi_a(z)\right)\right|^p|\varphi_a^{\prime}(z)|^2 d A(z)\right)^{\frac{q}{p}}|d \eta| \right)^\frac{1}{q}\\
& \lesssim \left(
\int_{\mathbb{T}}\left(\int_{\mathbb{D} \backslash \Delta(0, \rho)} \frac{(1-\left|z\right|^2)^{\lambda+\a}}{\left|1-\ol{\eta} \varphi_a(z)\right|^\lambda}\left|f_a\left(\varphi_a(z)\right)\right|^p|\varphi_a^{\prime}(z)|^{\lambda+\a+2} d A(z)\right)^{\frac{q}{p}}|d \eta| \right)^\frac{1}{q}\\
& \lesssim \left(
\int_{\mathbb{T}}\left(\int_{\mathbb{D} \backslash \Delta(0, \rho)} \frac{(1-\left|z\right|^2)^{\lambda+\a} |1-\ol{a}z|^\lambda}{\left|1-\overline{\varphi_a(\eta)} z\right|^\lambda |1-\ol{a}\eta|^\lambda}\left|f_a\left(\varphi_a(z)\right)\right|^p|\varphi_a^{\prime}(z)|^{\lambda+\a+2} d A(z)\right)^{\frac{q}{p}}|d \eta| \right)^\frac{1}{q}\\
& \lesssim \left(
\int_{\mathbb{T}}\left(\int_{\mathbb{D} \backslash \Delta(0, \rho)} \frac{(1-\left|z\right|^2)^{\lambda+\a} }{\left|1-\overline{\varphi_a(\eta)} z\right|^\lambda |1-\ol{a}\eta|^\lambda}(1-|a|^2)^{\lambda-\frac{p}{q}}|1-\ol{a}z|^{p\gamma-\lambda-2\a-4} d A(z)\right)^{\frac{q}{p}}|d \eta| \right)^\frac{1}{q}
\end{aligned}
$$
$$
\begin{aligned}
&\lesssim\left(\int_{\mathbb{T}} \frac{(1-|a|^2)^{\frac{\lambda q}{p}-1}}{|1-\ol{a} \eta|^{\frac{\lambda q}{p}}}\left(\int_{\mathbb{D} \backslash \Delta(0, \rho)} \frac{(1-|z|^2)^{\lambda+\a}|1-\ol{a} z|^{p\gamma-\lambda-2\a-4}}{\left|1-\overline{\varphi_a(\eta)} z\right|^\lambda} d A(z)\right)^{\frac{q}{p}}|d \eta|\right)^\frac{1}{q}. 
\end{aligned}
$$
Since \(\gamma >  \frac{\lambda + 2\alpha + 4}{p}\), we have
\(
|1 - \ol{a} z|^{p\gamma  - \lambda - 2\alpha - 4} \leq 2^{p\gamma  - \lambda - 2\alpha - 4}.
\)
Hence,
$$
\begin{aligned}
J & \lesssim\left( \int_{\mathbb{T}} \frac{(1-|a|^2)^{ \frac{\lambda q}{p}-1}}{|1-\ol{a} \eta|^{\frac{\lambda q}{p}}}\left(\int_{\mathbb{D} \backslash \Delta(0, \rho)} \frac{(1-|z|^2)^{\lambda+\a}}{\left|1-\overline{\varphi_a(\eta)} z\right|^\lambda} d A(z)\right)^{\frac{q}{p}}|d \eta| \right)^\frac{1}{q}\\
& =\left(\int_{\mathbb{T}} \frac{(1-|a|^2)^{ \frac{\lambda q}{p}-1}}{|1-\ol{a} \eta|^{\frac{\lambda q}{p}}}\left(\int_\rho^1 \frac{1}{\pi} \int_0^{2 \pi} \frac{d \theta}{|1-\overline{\varphi_a(\eta)} r e^{i \theta}|^\lambda} (1-r^2)^{\lambda+\a} r d r\right)^{\frac{q}{p}}|d \eta| \right)^\frac{1}{q}.
\end{aligned}
$$
Using the fact that $\left|\varphi_a(\eta)\right|=1=|\eta|$, $\lambda>\max \left\{1, \frac{p}{q}\right\}$ and Lemma \ref{inn}, we get
$$
J\lesssim \left((1-|a|^2)^{ \frac{\lambda q}{p}-1}(1-\rho)^{\frac{q(\a+2)}{p}}\int_{\mathbb{T}} \frac{|d \eta|}{|1-\ol{a} \eta|^{ \frac{\lambda q}{p}}} \right)^\frac{1}{q}.
$$
Applying Lemma \ref{inn} once more, we obtain
\[
J \lesssim (1 - \rho)^{ \frac{\alpha + 2}{p}}.
\]
Hence, for any given \(\varepsilon > 0\), there exists \(0 < \rho < 1\) such that $\left\|f_a \chi_{\Delta^c(a, \rho)}\right\|_{AT_p^q(\a)} \leq \varepsilon$.  The proof is complete.   
\end{proof}

\begin{Lemma}\cite{l0}\label{111}
	If $0<\rho<1$ and $\frac{2 \rho}{1+\rho^2} \leq r<1$, then
	$$
	E(a,\rho) \subseteq \Delta(a, r),\quad a\in\D.
	$$
\end{Lemma}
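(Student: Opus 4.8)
The plan is to prove the inclusion pointwise. Fix $a\in\D$; I want to show that every $z$ satisfying $|z-a|<\rho(1-|a|^2)$ lies in $\Delta(a,r)$, i.e. that $\left|\varphi_a(z)\right|<r$, where $\varphi_a(z)=\frac{a-z}{1-\ol{a}z}$. Since $\Delta(a,r)\subseteq\Delta(a,r')$ whenever $r\le r'$, and the hypothesis gives $r\ge\frac{2\rho}{1+\rho^2}$, it suffices to treat the smallest admissible radius $r=\frac{2\rho}{1+\rho^2}$; the inclusion for all larger $r$ then follows automatically. (Note $\frac{2\rho}{1+\rho^2}<1$ for every $\rho\in(0,1)$, so this value is indeed an admissible radius.)

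The heart of the argument is a lower bound for the denominator $|1-\ol{a}z|$. Writing $1-\ol{a}z=(1-|a|^2)+\ol{a}(a-z)$ and applying the reverse triangle inequality together with the defining estimate $|a-z|<\rho(1-|a|^2)$ of $E(a,\rho)$, I obtain
$$
|1-\ol{a}z|\ge (1-|a|^2)-|a|\,|a-z|>(1-|a|^2)-|a|\rho(1-|a|^2)=(1-|a|^2)(1-\rho|a|),
$$
where the factor $1-\rho|a|$ is positive because $\rho|a|<1$. Dividing the numerator bound $|a-z|<\rho(1-|a|^2)$ by this lower bound yields the clean estimate
$$
\left|\varphi_a(z)\right|=\frac{|a-z|}{|1-\ol{a}z|}<\frac{\rho(1-|a|^2)}{(1-|a|^2)(1-\rho|a|)}=\frac{\rho}{1-\rho|a|}.
$$
This is the only step in which the geometry of $E(a,\rho)$ enters; everything else is bookkeeping.

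It remains to compare $\frac{\rho}{1-\rho|a|}$ with the target $\frac{2\rho}{1+\rho^2}$. Cross-multiplying by the positive quantities $1-\rho|a|$ and $1+\rho^2$, the desired inequality $\frac{\rho}{1-\rho|a|}\le\frac{2\rho}{1+\rho^2}$ is equivalent to the elementary relation $2\rho|a|\le 1-\rho^2$; once this is verified, the two displays combine to give $z\in\Delta\!\left(a,\tfrac{2\rho}{1+\rho^2}\right)\subseteq\Delta(a,r)$, which completes the proof. I expect this comparison to be the main obstacle: the intermediate bound $\frac{\rho}{1-\rho|a|}$ degrades as $|a|\to1$, so passing to the uniform constant $\frac{2\rho}{1+\rho^2}$ is exactly where the interplay between $\rho$ and the location of $a$ must be handled with care, and it is here that the precise form of the admissible radius is used rather than the cruder bound one would get from $|a|<1$ alone.
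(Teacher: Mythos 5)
Your computation is correct and sharp up to the bound $|\varphi_a(z)|<\frac{\rho}{1-\rho|a|}$, but the final comparison — which you defer with ``once this is verified'' — is a genuine gap that cannot be filled. As you yourself reduce it, $\frac{\rho}{1-\rho|a|}\le\frac{2\rho}{1+\rho^2}$ is equivalent to $2\rho|a|\le 1-\rho^2$, and this is false whenever $|a|$ is close to $1$ unless $\rho\le\sqrt{2}-1$: take $\rho=\tfrac12$, $|a|=\tfrac9{10}$, giving $2\rho|a|=\tfrac9{10}>\tfrac34=1-\rho^2$. Worse, the obstruction is not an artifact of your estimate: your lower bound for $|1-\ol{a}z|$ is attained in the limit at the radially outward point $z=a+s\rho(1-|a|^2)\,a/|a|$, $s\to1^-$, which lies in $\D$ whenever $\rho(1+|a|)<1$, so $\sup_{z\in E(a,\rho)}|\varphi_a(z)|$ really equals $\frac{\rho}{1-\rho|a|}$ there. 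Concretely, with $\rho=0.45$, $a=0.99$, $z=0.9989$ one has $|z-a|=0.0089<\rho(1-|a|^2)\approx 0.008955$ and
\[
|\varphi_a(z)|=\frac{0.0089}{1-0.99\cdot 0.9989}\approx 0.803>0.748\approx\frac{2\rho}{1+\rho^2},
\]
so $E(a,\rho)\not\subseteq\Delta(a,r)$ for $r=0.75\ge\frac{2\rho}{1+\rho^2}$. In other words, with $E(a,\rho)$ of Euclidean radius $\rho(1-|a|^2)$, as this paper defines it in the Introduction, the lemma itself fails for every $\rho>\sqrt{2}-1$; no proof along your lines (or any other) can close the gap. (The paper gives no proof to compare against: it only cites Luecking.)

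The trouble is the normalization: in Luecking's original lemma the Euclidean disk has radius $\rho(1-|a|)$, not $\rho(1-|a|^2)$, and with that definition your argument goes through verbatim and even improves. Indeed, for $|z-a|<\rho(1-|a|)$,
\[
|1-\ol{a}z|\ge (1-|a|^2)-|a|\rho(1-|a|)=(1-|a|)\bigl(1+|a|(1-\rho)\bigr),
\]
whence
\[
|\varphi_a(z)|<\frac{\rho}{1+|a|(1-\rho)}\le\rho\le\frac{2\rho}{1+\rho^2}\le r,
\]
the penultimate inequality because $1+\rho^2\le 2$; so under the original normalization the hypothesis $r\ge\frac{2\rho}{1+\rho^2}$ is comfortably sufficient (in fact $r\ge\rho$ suffices), and your monotonicity reduction to the smallest admissible $r$ becomes unnecessary. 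As written, your proof is incomplete at exactly the step you flagged, and the statement it targets is true only after either restricting to $\rho\le\sqrt{2}-1$ or replacing the radius $\rho(1-|a|^2)$ by Luecking's $\rho(1-|a|)$.
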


The following can be found in \cite{ag2} or \cite{pk}.

\begin{Lemma}\label{222}
	Let $0<\rho,\tau<1$ and $\eta\in\mathbb{T}$. Then there exists  $ {\tau'}\in(\tau,1)$ such that
	$$
	\cup_{a \in \Gamma_\tau(\eta)} E(a,\rho) \subseteq \Gamma_{{\tau'}}(\eta).
	$$ 
\end{Lemma}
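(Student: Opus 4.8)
The plan is to collapse everything onto the single boundary point $\eta=1$ and then trade the Euclidean disks $E(a,\rho)$ for pseudo-hyperbolic disks, where the cone-like regions interact cleanly with the M\"obius geometry. First I would use rotation invariance: the rotation $z\mapsto\ol{\eta}z$ sends $E(a,\rho)$ to $E(\ol{\eta}a,\rho)$, since $|z-a|$ and $1-|a|^2$ are rotation invariant, and it sends $\Gamma_\tau(\eta)$ to $\Gamma_\tau(1)$, because the defining construction of $\Gamma_\tau$ (a fixed base disk together with segments to the vertex) is rotation covariant. Hence it suffices to prove the inclusion for $\eta=1$.

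Next I would invoke Lemma \ref{111} to replace each Euclidean disk by a pseudo-hyperbolic one: setting $r=\frac{2\rho}{1+\rho^2}\in(0,1)$ we get $E(a,\rho)\subseteq\Delta(a,r)$, so it is enough to show $\bigcup_{a\in\Gamma_\tau(1)}\Delta(a,r)\subseteq\Gamma_{\tau'}(1)$ for a suitable $\tau'\in(\tau,1)$. The gain is that pseudo-hyperbolic disks obey the M\"obius-invariant comparisons: for $w\in\Delta(a,r)$ one has $1-|w|^2\asymp 1-|a|^2$, $\,|1-\ol{\eta}w|\asymp|1-\ol{\eta}a|$ and $|w-a|\lesssim 1-|a|^2$, all with constants depending only on $r$. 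These say precisely that a point of $\Delta(a,r)$ is neither essentially closer to $\TT$ nor essentially farther from $\eta$ than $a$ itself.

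To conclude I would pass through the non-tangential description $\Gamma_\zeta$, which is interchangeable with the cone-like one $\Gamma_\tau$ (the equivalence underlying Lemma \ref{2.3}). Choosing $\zeta$ with $\Gamma_\tau(1)\subseteq\Gamma_\zeta(1)$, the membership $a\in\Gamma_\zeta(1)$ reads $|1-a|<\zeta(1-|a|^2)$ (recall $|a-\eta|=|1-\ol{\eta}a|$ for $\eta\in\TT$); combining it with the three comparisons gives $|1-w|\le C(r)|1-a|<C(r)\zeta(1-|a|^2)\lesssim_r \zeta(1-|w|^2)$, so that $w\in\Gamma_{\zeta'}(1)$ for a finite $\zeta'$ depending only on $\zeta$ and $r$. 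Finally I would pick $\tau'\in(\tau,1)$ with $\Gamma_{\zeta'}(1)\subseteq\Gamma_{\tau'}(1)$, which is possible because a region of finite non-tangential aperture sits inside a cone-like region whose parameter is strictly below $1$.

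I expect the real difficulty to be exactly this last point: ensuring that the enlarged region is still \emph{non-tangential}, i.e.\ that $\tau'$ can be taken $<1$ rather than merely finite. This is where the quantitative link between $\rho$ (equivalently $r$) and the resulting increase of the aperture must be controlled; concretely one has to rule out the creation of points $w$ with $(1-|w|^2)/|1-w|\to 0$, i.e.\ a tangential approach to $\eta$. The comparisons $1-|w|^2\asymp 1-|a|^2$ and $|1-\ol{\eta}w|\asymp|1-\ol{\eta}a|$ are precisely what forbid this, and the remaining bookkeeping — converting the finite aperture $\zeta'$ back into a cone parameter $\tau'<1$, and separately handling the harmless ``bulk'' where $a$ stays away from $\TT$ — is routine.
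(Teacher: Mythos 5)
Your proposal is correct, but note that the paper itself gives no proof of this lemma --- it simply cites \cite{ag2} and \cite{pk} --- so the comparison is with the standard argument in those references, which your plan essentially reproduces: reduce to $\eta=1$ by rotation, replace $E(a,\rho)$ by $\Delta(a,r)$ with $r=\frac{2\rho}{1+\rho^{2}}$ via Lemma \ref{111} (a good move, since $E(a,\rho)$ is only a \emph{truncated} Euclidean disk for $\rho>\frac12$, whereas $\Delta(a,r)\subseteq\D$ carries the uniform comparisons $1-|w|^{2}\asymp1-|a|^{2}$ and $|1-\ol{\eta}w|\asymp|1-\ol{\eta}a|$), and then convert between the cone-like regions $\Gamma_{\tau}$ and the Stolz-type regions $\Gamma_{\zeta}$.

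The one step you flag as the real difficulty does go through, and it is worth recording why quantitatively. Writing $z=1-se^{i\psi}$, one checks that $z\in\Gamma_{\zeta}(1)$ iff $2\cos\psi-s>\frac{1}{\zeta}$, which forces $|\sin\psi|<\sqrt{1-\frac{1}{4\zeta^{2}}}<1$ and $s<2\cos\psi-\frac{1}{\zeta}$; on the other hand $z$ lies on a segment $[z_{0},1)$ with $|z_{0}|<\tau'$ iff $|\sin\psi|<\tau'$ and $s<\cos\psi+\sqrt{\tau'^{2}-\sin^{2}\psi}$. A direct computation shows both conditions are implied by the former as soon as $\tau'>\sqrt{1-\frac{1}{4\zeta^{2}}}$, so any $\tau'\in\bigl(\max\{\tau,\sqrt{1-\tfrac{1}{4\zeta'^{2}}}\},1\bigr)$ works for the enlarged aperture $\zeta'$, and in particular $\tau'<1$ is guaranteed. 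Together with the elementary inclusion $\Gamma_{\tau}(1)\subseteq\Gamma_{\zeta}(1)$ for $\zeta>\frac{2}{1-\tau}$ (split into the bulk $|z|<\tau$ and the segments, on which $|1-z|\le 2(1-t)$ and $1-|z|\ge(1-t)(1-\tau)$), this closes the argument; no gap remains.
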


The following two lemmas play an important role in the proof of the main theorem. 

\begin{Lemma}\label{a}
Let \( 1 \leq p < \infty \), \( \alpha > -2 \), \( \varepsilon > 0 \), \( 0 <   \rho, \tau< 1 \), $\eta\in\mathbb{T}$, $n\in\mathbb{N}\cup\{0\}$ and \( f \in H(\mathbb{D}) \). Then,   there exists  $ {\tau'}\in(\tau,1)$ and a positive constant \( C_1  \)  depending only on \( \rho \), such that
\[
\int_{\Omega \cap \Gamma_\tau(\eta)} |f^{(n)}(z)|^p (1 - |z|^2)^{np + \alpha} \, dA(z) \leq C_1 \varepsilon \int_{\Gamma_{\tau'}(\eta)} |f^{(n)}(z)|^p (1 - |z|^2)^{np + \alpha} \, dA(z),
\]
where  \[
\Omega= \left\{ a \in \mathbb{D} : |f^{(n)}(a)|^p  < \frac{\varepsilon}{A(E(a, \rho))} \int_{E(a, \rho)} |f^{(n)}(z)|^p  \, dA(z) \right\}.
\]
\end{Lemma}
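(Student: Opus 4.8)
The plan is to substitute the defining inequality of $\Omega$, convert the resulting averages over the Euclidean discs $E(a,\rho)$ into the weighted integrand that appears on the right, and then interchange the order of integration. Write $L$ for the left-hand side. Since every $a\in\Omega$ satisfies $|f^{(n)}(a)|^p<\frac{\varepsilon}{A(E(a,\rho))}\int_{E(a,\rho)}|f^{(n)}(z)|^p\,dA(z)$, we immediately obtain
\[
L< \varepsilon\int_{\Omega\cap\Gamma_\tau(\eta)}\frac{(1-|a|^2)^{np+\alpha}}{A(E(a,\rho))}\left(\int_{E(a,\rho)}|f^{(n)}(z)|^p\,dA(z)\right)dA(a).
\]
To match the integrand on the right I would invoke the two standard facts that for $z\in E(a,\rho)$ one has $1-|z|^2\asymp 1-|a|^2$ and $A(E(a,\rho))\asymp(1-|a|^2)^2$, with comparability constants depending only on $\rho$ (and the fixed parameters $n,p,\alpha$). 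These let me rewrite the last display, up to such a constant, as
\[
L\lesssim \varepsilon\int_{\Omega\cap\Gamma_\tau(\eta)}\frac{1}{(1-|a|^2)^2}\left(\int_{E(a,\rho)}|f^{(n)}(z)|^p(1-|z|^2)^{np+\alpha}\,dA(z)\right)dA(a).
\]

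Next I would apply Tonelli's theorem to this nonnegative double integral in the variables $(a,z)$ coupled by the membership $z\in E(a,\rho)$. The decisive point is that this relation is quasi-symmetric: using $1-|z|^2\asymp 1-|a|^2$, the inequality $|z-a|<\rho(1-|a|^2)$ forces $a$ to lie in a Euclidean disc $E(z,\rho')$ for some $\rho'$ depending only on $\rho$, on which again $1-|a|^2\asymp 1-|z|^2$. Interchanging the order of integration therefore gives
\[
L\lesssim \varepsilon\int |f^{(n)}(z)|^p(1-|z|^2)^{np+\alpha}\left(\int_{\{a:\,z\in E(a,\rho),\,a\in\Gamma_\tau(\eta)\}}\frac{dA(a)}{(1-|a|^2)^2}\right)dA(z),
\]
and the inner integral is at most $\frac{1}{(1-|z|^2)^2}A(E(z,\rho'))\asymp 1$, a bound depending only on $\rho$.

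Finally, the outer integral in $z$ runs only over those $z$ belonging to $E(a,\rho)$ for some $a\in\Gamma_\tau(\eta)$, that is, over $\bigcup_{a\in\Gamma_\tau(\eta)}E(a,\rho)$, which by Lemma \ref{222} is contained in $\Gamma_{\tau'}(\eta)$ for a suitable $\tau'\in(\tau,1)$. Dropping the harmless $\Omega$-constraint and enlarging the domain to $\Gamma_{\tau'}(\eta)$ yields
\[
L\lesssim \varepsilon\int_{\Gamma_{\tau'}(\eta)}|f^{(n)}(z)|^p(1-|z|^2)^{np+\alpha}\,dA(z),
\]
which is the claim, with $C_1$ absorbing all the $\rho$-dependent constants. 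The only delicate step is the Tonelli interchange together with the quasi-symmetric inversion of the relation $z\in E(a,\rho)$; once the weight comparability and the area estimate $A(E(a,\rho))\asymp(1-|a|^2)^2$ are in hand, the remainder is bookkeeping. Note that the argument is purely measure-theoretic, treating $|f^{(n)}|^p$ as a single nonnegative integrand, so the standing hypothesis $p\geq 1$ is not actually needed here.
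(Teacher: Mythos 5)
Your overall architecture is the same as the paper's (substitute the defining inequality of $\Omega$, interchange the order of integration, bound the resulting inner integral by a constant multiple of $(1-|z|^2)^{np+\alpha}$, and enlarge the cone via Lemma \ref{222}), but there is a genuine gap in the two ``standard facts'' on which your weight transfer and your Tonelli inversion rest. For the Euclidean disc $E(a,\rho)=\{z\in\mathbb{D}:|z-a|<\rho(1-|a|^2)\}$ with $\rho\geq \tfrac12$, the radius $\rho(1-|a|^2)$ can exceed $1-|a|=\operatorname{dist}(a,\partial\mathbb{D})$, so $E(a,\rho)$ reaches the boundary and contains points $z$ with $1-|z|$ arbitrarily small compared to $1-|a|$. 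Hence the comparability $1-|z|^2\asymp 1-|a|^2$ on $E(a,\rho)$ holds (with constants depending only on $\rho$) only in the direction $1-|z|^2\lesssim 1-|a|^2$; the reverse direction fails for every $\rho\geq\tfrac12$. This breaks your passage to the display with $(1-|z|^2)^{np+\alpha}$ inside the inner integral whenever $np+\alpha>0$, and it also breaks the quasi-symmetry claim: for $\rho>\tfrac12$ the set $\{a: z\in E(a,\rho)\}$ is \emph{not} contained in any Euclidean disc about $z$ of radius comparable to $1-|z|^2$ (e.g.\ $\rho=0.9$, $z=0.999$, $a=0.5$ satisfies $|z-a|<\rho(1-|a|^2)$ while $|z-a|\gg 1-|z|^2$). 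A dyadic computation shows that in this range your inner integral $\int_{\{a:\,z\in E(a,\rho)\}}(1-|a|^2)^{-2}\,dA(a)$ actually grows like $\log\frac{1}{1-|z|^2}$ rather than being $\asymp 1$. Since the lemma is stated, and is used in the main theorem, for arbitrary $\rho\in(0,1)$, your argument as written only covers $\rho<\tfrac12$.

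The fix is exactly the device the paper uses: by Lemma \ref{111}, $E(a,\rho)\subseteq\Delta(a,r)$ for $\frac{2\rho}{1+\rho^2}\leq r<1$, and pseudo-hyperbolic discs enjoy the exact symmetry $\chi_{\Delta(a,r)}(z)=\chi_{\Delta(z,r)}(a)$ together with the genuine two-sided comparability $1-|a|^2\asymp 1-|z|^2$ on $\Delta(z,r)$ for every $r\in(0,1)$. The paper keeps the weight $(1-|a|^2)^{np+\alpha}$ on the $a$-variable through the Fubini step, replaces $\chi_{E(a,\rho)}(z)$ by $\chi_{\Delta(z,r)}(a)$, and only then converts the weight, so that the inner integral is taken over $\Delta(z,r)$ and is bounded by $C_1(1-|z|^2)^{np+\alpha}$ uniformly in $z$. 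If you route your interchange through $\Delta(z,r)$ in this way, the rest of your argument (including the observation that $p\geq 1$ plays no role) is sound.
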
 

\begin{proof} If $a \in \Omega$, then
$$
|f^{(n)}(a)|^p<\frac{\varepsilon}{A\left(E(a,\rho)\right)} \int_{E(a,\rho)}|f^{(n)}(z)|^p d A(z) .
$$
Using   Lemma \ref{222} and Fubini's theorem,   there exists $ {\tau'}\in(\tau,1)$ such that 
$$
\begin{aligned}
&\int_{\Omega \cap \Gamma_\tau(\eta)}|f^{(n)}(a)|^p(1-|a|^2)^{np+\a} d A(a) \\
<&\varepsilon \int_{\Omega \cap \Gamma_\tau(\eta)}\left(\frac{1}{A\left(E(a,\rho)\right)} \int_{E(a,\rho)}|f^{(n)}(z)|^p d A(z)\right) (1-|a|^2)^{np+\a}d A(a) \\
\leq& \varepsilon \int_{\Omega \cap \Gamma_\tau(\eta)}\left(\frac{1}{A\left(E(a,\rho)\right)} \int_{\Gamma_{\tau'}(\eta)}|f^{(n)}(z)|^p\chi_{E(a,\rho)}(z)  d A(z)\right) (1-|a|^2)^{np+\a}d A(a) \\
\le&\varepsilon \int_{\Gamma_{\tau'}(\eta)}|f^{(n)}(z)|^p \int_{\Omega \cap \Gamma_\tau(\eta)}\frac{\chi_{\Delta(z, r)}(a)}{A\left(E(a,\rho)\right)}(1-|a|^2)^{np+\a}d A(a)d A(z) 
\end{aligned}
$$
when $\frac{2 \rho}{1+\rho^2} \leq r<1$.  The last inequality used the fact that 
$$\chi_{E(a,\rho)}(z) \leq \chi_{\Delta(a, r)}(z)=\chi_{\Delta(z,r)}(a)$$ 
when $\frac{2 \rho}{1+\rho^2} \leq r<1$. 
In addition, $A\left(E(a,\rho)\right) \asymp\rho^2(1-|a|^2)^2$. Hence, 
\begin{align*}
 \int_{\Omega \cap \Gamma_\tau(\eta)}\frac{\chi_{\Delta(z, r)}(a)}{A\left(E(a,\rho)\right)}(1-|a|^2)^{np+\a} d A(a) 
\lesssim&  \int_{\Delta(z,r)}\frac{(1-|a|^2)^{np+\a}}{\rho^2(1-|a|^2)^2}dA(a)\\
\lesssim&\frac{(1-|z|^2)^{np+\a}}{\rho^2(1-|z|^2)^{2}}A(\Delta(z,r)) \le C_1(1-|z|^2)^{np+\a},
\end{align*}
where the constant $C_1$ depends only on $\rho$. Therefore,
$$
\int_{\Omega \cap \Gamma_\tau(\eta)}|f^{(n)}(z)|^p(1-|z|^2)^{np+\a} A(z) \leq C_1 \varepsilon \int_{\Gamma_{\tau'}(\eta)}|f^{(n)}(z)|^p(1-|z|^2)^{np+\a} dA(z) .
$$
The proof is complete. 
\end{proof}

Let $1 \leq p<\infty$, $0<\rho, \lambda<1$, $ a \in \mathbb{D}$ and $n\in\mathbb{N}\cup\{0\}$. Define 
$$
E_\lambda(a)=\left\{z \in E(a,\rho):|f^{(n)}(z)|^p \geq \lambda|f^{(n)}(a)|^p\right\}, 
$$
and 
$$
B_\lambda f(a)=\frac{1}{A\left(E_\lambda(a)\right)} \int_{E_\lambda(a)}|f^{(n)}(z)|^p d A(z).
$$

\begin{Lemma}\label{b}
Let \( 1 \leq p < \infty \), \( \alpha > -2 \), \( 0 < \varepsilon,\rho,\tau < 1 \), \( 0 < \lambda < \frac{1}{2^p} \), $\eta\in\mathbb{T}$, $n\in\mathbb{N}\cup\{0\}$ and \( f \in H(\mathbb{D}) \). Then,  there exists   $ {\tau'}\in(\tau,1)$ and a positive constant \( C_2  \) depending only on \( \rho \),  such that
\[
\int_{B \cap \Gamma_\tau(\eta)} |f^{(n)}(z)|^p (1 - |z|^2)^{np + \alpha} \, dA(z) \leq C_2 \varepsilon \int_{\Gamma_{\tau'}(\eta)} |f^{(n)}(z)|^p (1 - |z|^2)^{np + \alpha}  dA(z),
\]
where \[
B = \left\{ a \in \mathbb{D} : |f^{(n)}(a)|^p < \varepsilon^{1 + \frac{2}{p}} B_\lambda f(a) \right\}.
\]
\end{Lemma}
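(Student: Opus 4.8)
The plan is to reduce Lemma~\ref{b} to Lemma~\ref{a} by replacing the restricted average $B_\lambda f(a)$ with a genuine solid average of $|f^{(n)}|^p$ over a slightly dilated disk. The only property of $B_\lambda f(a)$ I will use is that it is an average of $|f^{(n)}|^p$ over the subset $E_\lambda(a)\subseteq E(a,\rho)$, so that
$$
B_\lambda f(a)\le \sup_{z\in E(a,\rho)}|f^{(n)}(z)|^p.
$$
In particular the precise value of $\lambda$, and the hypothesis $\lambda<\tfrac{1}{2^p}$, are irrelevant for the estimate I will establish; in particular no lower bound on $A(E_\lambda(a))$ is needed (which is fortunate, since $A(E_\lambda(a))$ can be arbitrarily small when $|f^{(n)}(a)|$ is small).

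Next I would control this supremum by a solid average over a dilated disk. Fix $\rho'\in(\rho,1)$, for instance $\rho'=\tfrac{1+\rho}{2}$. Each $z\in E(a,\rho)$ lies at Euclidean distance at least $(\rho'-\rho)(1-|a|^2)$ from the boundary of $E(a,\rho')$, so the Euclidean disk of that radius about $z$ is contained in $E(a,\rho')$. Since $f^{(n)}$ is holomorphic and $p\ge 1$, the function $|f^{(n)}|^p$ is subharmonic, and the sub-mean value inequality over that disk gives, uniformly in $z\in E(a,\rho)$,
$$
|f^{(n)}(z)|^p\le \frac{C_\rho}{A(E(a,\rho'))}\int_{E(a,\rho')}|f^{(n)}(w)|^p\,dA(w),\qquad C_\rho=\frac{(\rho')^2}{(\rho'-\rho)^2}.
$$

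Combining the two estimates, every $a\in B$ satisfies
$$
|f^{(n)}(a)|^p<\varepsilon^{1+\frac{2}{p}}B_\lambda f(a)\le \frac{C_\rho\,\varepsilon^{1+\frac{2}{p}}}{A(E(a,\rho'))}\int_{E(a,\rho')}|f^{(n)}(w)|^p\,dA(w).
$$
Writing $\widetilde\varepsilon=C_\rho\varepsilon^{1+\frac{2}{p}}$, this is precisely the condition defining the set $\Omega$ of Lemma~\ref{a}, but with the parameter $\widetilde\varepsilon$ and the radius $\rho'$ in place of $\rho$; hence $B$ is contained in that set. Applying Lemma~\ref{a} (with $\rho'$) and using that the integrand is nonnegative, I obtain some $\tau'\in(\tau,1)$ with
$$
\int_{B\cap\Gamma_\tau(\eta)}|f^{(n)}(z)|^p(1-|z|^2)^{np+\alpha}\,dA(z)\le C_1\widetilde\varepsilon\int_{\Gamma_{\tau'}(\eta)}|f^{(n)}(z)|^p(1-|z|^2)^{np+\alpha}\,dA(z).
$$
Finally $0<\varepsilon<1$ gives $\varepsilon^{2/p}\le 1$, so $\widetilde\varepsilon\le C_\rho\varepsilon$ and the constant $C_2=C_1C_\rho$ --- which depends only on $\rho$, since $\rho'$ is a fixed function of $\rho$ --- yields the assertion.

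The two averaging bounds are routine; the only delicate point is the bookkeeping in the reduction, namely invoking Lemma~\ref{a} with the dilated radius $\rho'$ and modified parameter $\widetilde\varepsilon$ while keeping the final constant dependent on $\rho$ alone. The conceptual heart of the argument is that the surplus factor $\varepsilon^{2/p}$ built into the definition of $B$ is exactly what is needed to absorb the loss $C_\rho$ incurred in passing from the supremum to the solid average, leaving the required single power of $\varepsilon$.
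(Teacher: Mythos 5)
Your reduction to Lemma \ref{a} is appealing, but the second averaging bound is false for general $\rho\in(0,1)$, and this is exactly where the hypothesis you discard does the real work. The set $E(a,\rho)=\{z\in\mathbb{D}:|z-a|<\rho(1-|a|^2)\}$ is a Euclidean disk \emph{truncated by} $\mathbb{D}$; whenever $\rho(1+|a|)>1$ (so for any $\rho>\tfrac12$ and $|a|$ close to $1$) it contains points $z$ whose distance to $\partial\mathbb{D}$ is arbitrarily small compared with $1-|a|^2$. For such $z$ the Euclidean disk of radius $(\rho'-\rho)(1-|a|^2)$ about $z$ is \emph{not} contained in $\mathbb{D}$ (only in the untruncated disk $\{w:|w-a|<\rho'(1-|a|^2)\}$), so the sub-mean value inequality cannot be applied on it. The claimed estimate
\[
\sup_{z\in E(a,\rho)}|f^{(n)}(z)|^p\le \frac{C_\rho}{A(E(a,\rho'))}\int_{E(a,\rho')}|f^{(n)}(w)|^p\,dA(w)
\]
genuinely fails: take $f^{(n)}(z)=(1-z)^{-1/p}$ and $a$ real near $1$ with $1\in\overline{E(a,\rho)}$; the left-hand side is infinite while the right-hand side is finite because $|1-z|^{-1}$ is locally area-integrable. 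Hence the inclusion of $B$ into the set $\Omega$ of Lemma \ref{a} (with parameters $\widetilde\varepsilon,\rho'$) is not established, and the proof has a gap for precisely the range of $\rho$ that the Main Theorem must allow, since the $\rho$ there is produced by condition (iii) and cannot be assumed small.

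This is why the paper does not bound $B_\lambda f(a)$ by a supremum over $E(a,\rho)$. Its proof splits $B$ into $B\cap\Omega$ (handled directly by Lemma \ref{a}) and $B\setminus\Omega$, and on the latter set proves the inclusion (\ref{2.2}), which yields the lower bound $A(E_\lambda(a))\ge \frac{\varepsilon^{2/p}}{4C^2}A(E(a,\rho))$; crucially, the Cauchy integral formula and subharmonicity are used there only on circles and disks of radius comparable to $\frac{\rho}{2}(1-|a|)$ \emph{centered at $a$}, which stay safely inside $\mathbb{D}$. That lower bound converts the average over $E_\lambda(a)$ into $\varepsilon^{-2/p}$ times a solid average over $E(a,\rho)$, the surplus $\varepsilon^{2/p}$ in the definition of $B$ cancels it, and the Fubini argument of Lemma \ref{a} finishes. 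The hypothesis $0<\lambda<2^{-p}$, which you declare irrelevant, is exactly what makes (\ref{2.2}) work. Your argument is correct as written only in the regime where every point of $E(a,\rho)$ is at distance $\gtrsim(1-|a|^2)$ from $\partial\mathbb{D}$ (e.g. $\rho<\tfrac12$ with $\rho'-\rho$ chosen smaller than $\tfrac{1-2\rho}{2}$); to cover all $\rho\in(0,1)$ you need the paper's decomposition through $\Omega$ and the measure estimate for $E_\lambda(a)$.
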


\begin{proof} If $a \in B$, then
$$
|f^{(n)}(a)|^p<\varepsilon^{1+\frac{2}{p}} B_\lambda f(a)
=\frac{\varepsilon^{1+\frac{2}{p}}}{A\left(E_\lambda(a)\right)} \int_{E_\lambda(a)}|f^{(n)}(z)|^pd A(z).
$$
By Lemma \ref{a}, there exists  $ {\tau'}\in(\tau,1)$ and a positive constant \( C_1  \) depending only on \( \rho \),  such that 
$$
\begin{aligned}
&\int_{B \cap \Gamma_\tau(\eta)}|f^{(n)}(a)|^p(1-|a|^2)^{np+\a}d A(a)\\
=&\int_{B \cap \Gamma_\tau(\eta)\cap \Omega}|f^{(n)}(a)|^p(1-|a|^2)^{np+\a}d A(a)+\int_{\left(B \cap \Gamma_\tau(\eta)\right) \backslash \Omega}|f^{(n)}(a)|^p(1-|a|^2)^{np+\a}d A(a)\\
\le&C_1 \varepsilon \int_{\Gamma_{\tau'}(\eta)}|f^{(n)}(a)|^p(1-|a|^2)^{np+\a} dA(a)+\int_{\left(B \cap \Gamma_\tau(\eta)\right) \backslash \Omega}|f^{(n)}(a)|^p(1-|a|^2)^{np+\a}d A(a).
\end{aligned}
$$
Using   Lemma \ref{222} and Fubini's theorem,  we get
\begin{align*}
&\int_{\left(B \cap \Gamma_\tau(\eta) \right) \backslash \Omega}|f^{(n)}(a)|^p(1-|a|^2)^{np+\a}dA(a)\\
\le&\varepsilon^{1+\frac{2}{p}} \int_{\left(B \cap \Gamma_\tau(\eta)\right) \backslash \Omega} \frac{1}{A\left(E_\lambda(a)\right)} \int_{E_\lambda(a)}|f^{(n)}(z)|^p d A(z) (1-|a|^2)^{np+\a}dA(a)\\
\le&\varepsilon^{1+\frac{2}{p}} \int_{\left(B \cap \Gamma_\tau(\eta)\right) \backslash \Omega} \frac{1}{A\left(E_\lambda(a)\right)} \int_{\Gamma_{\tau'}(\eta)}|f^{(n)}(z)|^p \chi_{E_\lambda(a)}(z) d A(z) (1-|a|^2)^{np+\a}dA(a)\\
\le&\varepsilon^{1+\frac{2}{p}} \int_{\Gamma_{\tau'}(\eta)}|f^{(n)}(z)|^p\int_{\left(B \cap \Gamma_\tau(\eta)\right) \backslash \Omega}\frac{\chi_{E_\lambda(a)}(z)}{A\left(E_\lambda(a)\right)} (1-|a|^2)^{np+\a}dA(a)dA(z)\\
\le&\varepsilon^{1+\frac{2}{p}} \int_{\Gamma_{\tau'}(\eta)}|f^{(n)}(z)|^p\int_{\left(B \cap \Gamma_\tau(\eta)\right) \backslash \Omega}\frac{\chi_{E(a,\rho)}(z)}{A\left(E_\lambda(a)\right)} (1-|a|^2)^{np+\a}dA(a)dA(z),
\end{align*}
where the last inequality is justified by the inclusion $E_\lambda(a)\subset E(a,\rho)$.

 Assume that  
\begin{align}\label{2.2}
 \left\{z \in \mathbb{D}:|z-a|<\frac{\varepsilon^{\frac{1}{p}} \rho(1-|a|^2)}{2 C}\right\} \subset E_\lambda(a), \quad \forall a \notin \Omega,
 \end{align}
 where $C$ is a positive constant to be determined later. Then
$$
A(E_\lambda(a))\ge\frac{\varepsilon^{\frac{2}{p}}}{4C^2}A(E(a,\rho)),\quad \forall a \notin \Omega.
$$
Using (\ref{2.2}) and the fact that  $\chi_{E(a,\rho)}(z) \leq \chi_{\Delta(a, r)}(z)=\chi_{\Delta(z,r)}(a)$ 
when $\frac{2 \rho}{1+\rho^2} \leq r<1$, we get
$$
\begin{aligned}
&\int_{\left(B \cap \Gamma_\tau(\eta) \right) \backslash \Omega}|f^{(n)}(a)|^p(1-|a|^2)^{np+\a}dA(a)\\
\le&4C^2\varepsilon \int_{\Gamma_{\tau'}(\eta)}|f^{(n)}(z)|^p\int_{\left(B \cap \Gamma_\tau(\eta)\right) \backslash \Omega}\frac{\chi_{E(a,\rho)}(z)}{A\left(E(a,\rho)\right)} (1-|a|^2)^{np+\a}dA(a)dA(z)\\
\le& 4C^2\varepsilon \int_{\Gamma_{\tau'}(\eta)}|f^{(n)}(z)|^p\int_{\left(B \cap \Gamma_\tau(\eta)\right) \backslash \Omega}\frac{\chi_{\Delta(z,r)}(a)}{A\left(E(a,\rho)\right)} (1-|a|^2)^{np+\a}dA(a)dA(z)\\
\le& C\varepsilon\int_{\Gamma_{\tau'}(\eta)}|f^{(n)}(z)|^p(1-|z|^2)^{np+\a}dA(z).
\end{aligned}
$$

To complete the proof,   we only need to prove the formula (\ref{2.2}).
Let $a\in \D$ and $z\in E(a,\rho)$. Using Cauchy integral formula, we get
\begin{align*}
&|f^{(n)}(z)-f^{(n)}(a)|  
=  \frac{1}{2\pi} \left| \int_{|w - a| = \frac{\rho}{2}(1 - |a|)} f^{(n)}(w) \left( \frac{1}{w - z} - \frac{1}{w - a} \right) dw \right| \\
\leq& \frac{1}{2\pi} \int_{|w - a| = \frac{\rho}{2}(1 - |a|)} |f^{(n)}(w)| \left| \frac{z - a}{(w - z)(w - a)} \right| |dw|. 
\end{align*}
By subharmonicity, for \( w \in \overline{E(a,\frac{\rho}{2})} \), we obtain
\[
|f^{(n)}(w)| \leq \frac{C_3}{A(E(a,\rho))} \int_{E(a,\rho)} |f^{(n)}(u)| dA(u),
\]
where \( C_3 >0\). Therefore,
\[
|f^{(n)}(z)-f^{(n)}(a)| \leq \frac{4C_3 }{\rho(1 - |a|)} \frac{|z - a|}{A(E(a,\rho))} \int_{E(a,\rho)} |f^{(n)}(u)| dA(u).
\]
Now, if we consider \( z \in E(a,\frac{\varepsilon^{\frac{1}{p}}\rho}{2C}) \) for a \( C \) large enough, then
\[
|f^{(n)}(z)-f^{(n)}(a)| \leq \frac{\varepsilon^{\frac{1}{p}}}{2A(E(a,\rho))} \int_{E(a,\rho)} |f^{(n)}(u)| dA(u).
\]
Hence, if \( a \notin \Omega \), we get
\[
|f^{(n)}(z)| \geq |f^{(n)}(a)| - |f^{(n)}(a) - f^{(n)}(z)| \geq \frac{1}{2} |f^{(n)}(a)|.
\]
Therefore,
\[
|f^{(n)}(z)|^p \geq \frac{|f^{(n)}(a)|^p}{2^p} > \lambda |f^{(n)}(a)|^p,
\]
and this implies (\ref{2.2}). The proof is complete.
\end{proof}

As the proof of the following lemma employs an approach analogous to that of Lemma 1 in \cite{l0}, the proof is hereby omitted.

\begin{Lemma}\label{1.}
Let $1\le p<\infty$, $0<\lambda,\rho<1$, $n\in\mathbb{N}\cup\{0\}$ and $f\in H(\D)$. Then 
\begin{align*}
\frac{A\left(E_\lambda(a)\right)}{A\left(E(a,\rho)\right)} \geq \frac{\log \left(\frac{1}{\lambda}\right)}{\log \left(\frac{B_\lambda f(a)}{|f^{(n)}(a)|^p}\right)+\log \left(\frac{1}{\lambda}\right)},  \quad a\in\D.
\end{align*}
\end{Lemma}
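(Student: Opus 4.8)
The plan is to reduce everything to the subharmonicity of $u:=\log|f^{(n)}|^{p}=p\log|f^{(n)}|$ together with a single application of Jensen's inequality. Write $g:=|f^{(n)}|^{p}$, $M:=g(a)=|f^{(n)}(a)|^{p}$, $V:=A(E(a,\rho))$ and $V_\lambda:=A(E_\lambda(a))$. I may assume $M>0$, since otherwise $E_\lambda(a)=E(a,\rho)$ and the ratio equals $1$, making the inequality trivial; note also that $a\in E_\lambda(a)$ together with continuity of $g$ forces $V_\lambda>0$, so $B_\lambda f(a)$ is well defined. The one place where the geometry enters is the observation that $E(a,\rho)$ is a \emph{Euclidean} disk centred at $a$, so the area sub-mean value property for the subharmonic function $u$ applies directly:
\[
\log M=u(a)\le \frac{1}{V}\int_{E(a,\rho)}\log g\,dA .
\]
This is the mechanism that converts the value at the centre into a statement about the logarithmic average of $g$.

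Next I would split $E(a,\rho)=E_\lambda(a)\cup\big(E(a,\rho)\setminus E_\lambda(a)\big)$ and estimate the two pieces in opposite directions. On $E_\lambda(a)$ the concavity of the logarithm gives, by Jensen's inequality,
\[
\frac{1}{V_\lambda}\int_{E_\lambda(a)}\log g\,dA\le \log\Big(\frac{1}{V_\lambda}\int_{E_\lambda(a)}g\,dA\Big)=\log B_\lambda f(a),
\]
while on the complement the very definition of $E_\lambda(a)$ forces $g(z)<\lambda M$, hence $\log g(z)<\log\lambda+\log M$ pointwise there. Feeding both bounds into the sub-mean value inequality above yields the single master estimate
\[
V\log M\le V_\lambda\log B_\lambda f(a)+(V-V_\lambda)\big(\log\lambda+\log M\big).
\]

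Finally I would rearrange. Cancelling $(V-V_\lambda)\log M$ leaves $V_\lambda\log M\le V_\lambda\log B_\lambda f(a)+(V-V_\lambda)\log\lambda$, which after multiplying by $-1$ and using $\log\frac1\lambda>0$ becomes $V_\lambda\log\frac{B_\lambda f(a)}{M}\ge (V-V_\lambda)\log\frac1\lambda$. Adding $V_\lambda\log\frac1\lambda$ to both sides and dividing by $V\big(\log\frac{B_\lambda f(a)}{M}+\log\frac1\lambda\big)$ produces exactly the claimed lower bound for $V_\lambda/V$; the denominator is positive because $a\in E_\lambda(a)$ gives $B_\lambda f(a)>\lambda M$ strictly, so $\log\frac{B_\lambda f(a)}{M}+\log\frac1\lambda>0$. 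There is no genuine obstacle beyond orienting the two estimates correctly: the sub-mean value property must be used as a \emph{lower} bound for the log-average (controlling $\log M$ from above), whereas Jensen must be used as an \emph{upper} bound on $E_\lambda(a)$; everything else is bookkeeping. This is precisely the argument of Luecking's Lemma~1 in \cite{l0}, here applied to $f^{(n)}$ in place of $f$, which is why the authors omit the details.
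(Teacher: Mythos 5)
Your argument is correct and is precisely the approach the paper intends: the paper omits the proof and cites Luecking's Lemma~1 in \cite{l0}, whose mechanism is exactly your combination of the area sub-mean value property for the subharmonic function $\log|f^{(n)}|^{p}$ on the Euclidean disk $E(a,\rho)$, Jensen's inequality on $E_\lambda(a)$, and the pointwise bound $\log g<\log\lambda+\log M$ on the complement. The edge cases you flag ($M=0$ and positivity of the denominator) are handled correctly, so nothing is missing.
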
\msk  \vskip 4mm

\section{Proof of Main Theorem  }

\begin{proof}

$(i)\Rightarrow(ii)$  For $f \in AT_p^q(\a)/\Upsilon_k$, assume there exists a constant $C_4>0$ such that 
$$
\| T_g^{n,k}f \|_{AT_p^q(\a)/\Upsilon_k} \geq C_4\|f\|_{AT_p^q(\a)/\Upsilon_k}.
$$
Using Lemma \ref{Tfn}, we have
\begin{align*}
	&\int_{\mathbb{T}}\left(\int_{\Gamma_\tau(\eta)}|f^{(k)}(z)|^p|g^{(n-k)}(z)|^p(1-|z|^2)^{np+\a}d A(z)\right)^{\frac{q}{p}}|d \eta| \\
	\geq& C_4 \int_{\mathbb{T}}\left(\int_{\Gamma_\tau(\eta)}|f^{(k)}(z)|^p(1-|z|^2)^{kp+\a}d A(z)\right)^{\frac{q}{p}}|d \eta|.
\end{align*}
Using \cite[Theorem 5.4]{z1}, we obtain
$$
\begin{aligned}
	&\int_{\mathbb{T}}\left(\int_{\Gamma_\tau(\eta)}|f^{(k)}(z)|^p|g^{(n-k)}(z)|^p(1-|z|^2)^{np+\a}d A(z)\right)^{\frac{q}{p}}|d \eta| \\
	\lesssim& \int_{\mathbb{T}}\left(\int_{\Gamma_\tau(\eta) \cap G_c^{n,k}}|f^{(k)}(z)|^p|g^{(n-k)}(z)|^p(1-|z|^2)^{np+\a}d A(z)\right)^{\frac{q}{p}}|d \eta| \\
	&+\int_{\mathbb{T}}\left(\int_{\Gamma_\tau(\eta) \backslash G_c^{n,k}}|f^{(k)}(z)|^p|g^{(n-k)}(z)|^p(1-|z|^2)^{np+\a}d A(z)\right)^{\frac{q}{p}}|d \eta| \\
	\leq&\|g\|_{\mathcal{B}}^q \int_{\mathbb{T}}\left(\int_{\Gamma_\tau(\eta) \cap G_c^{n,k}}|f^{(k)}(z)|^p(1-|z|^2)^{kp+\a}d A(z)\right)^{\frac{q}{p}}|d \eta|  \\
	&+c^q \int_{\mathbb{T}}\left(\int_{\Gamma_\tau(\eta)}|f^{(k)}(z)|^p(1-|z|^2)^{kp+\a}d A(z)\right)^{\frac{q}{p}}|d \eta|  .
\end{aligned}
$$
Hence, we get
\begin{align*}
	&\|g\|_{\mathcal{B}}^q \int_{\mathbb{T}}\left(\int_{\Gamma_\tau(\eta) \cap G_c^{n,k}}|f^{(k)}(z)|^p(1-|z|^2)^{kp+\a}d A(z)\right)^{\frac{q}{p}}|d \eta|  \\
	&+c^q \int_{\mathbb{T}}\left(\int_{\Gamma_\tau(\eta)}|f^{(k)}(z)|^p(1-|z|^2)^{kp+\a}d A(z)\right)^{\frac{q}{p}}|d \eta|\\
	\ge&C_4 \int_{\mathbb{T}}\left(\int_{\Gamma_\tau(\eta)}|f^{(k)}(z)|^p(1-|z|^2)^{kp+\a}d A(z)\right)^{\frac{q}{p}}|d \eta|.
\end{align*}
Therefore, choosing $c>0$ small enough, there exists $C_5>0$ such that 
\begin{align}\label{cls}
\int_{\mathbb{T}}\left(\int_{\Gamma_\tau(\eta) \cap G_c^{n,k}}|f^{(k)}(z)|^p(1-|z|^2)^{kp+\a}d A(z)\right)^{\frac{q}{p}}|d \eta|
\ge C_5 \|f\|_{AT_p^q(\a)/\Upsilon_k}.
\end{align}

 For $\lambda>\max \left\{1, \frac{p}{q}\right\}$ and $a\in \DD$. Consider   the function $f_a$ defined in Proposition \ref{3.5}. Applying Lemma \ref{2.3}, Proposition \ref{3.5} and \cite[(3.4)]{ag2}, there exists $0<\rho<1$ such that
\begin{align} \label{cls2}
 \left\|f_a \chi_{G_c^{n,k}\cap \Delta(a, \rho)}\right\|_{AT_p^q(\a)/\Upsilon_k}^p 
\asymp&\left(\int_{\mathbb{T}}\left(\int_{\Gamma_{\tau}(\eta)} \chi_{G_c^{n,k}\cap \Delta(a, \rho)}(z)|f_a(z)|^p (1-|z|^2)^{\a}dA(z)\right)^{\frac{q}{p}}|d \eta|\right)^{\frac{p}{q}}\nonumber \\	
\asymp&
{(1-|a|^2)^{-2-\frac{p}{q}}}\left(\int_{\mathbb{T}}\left(\int_{\Gamma_{\tau}(\eta)} \chi_{G_c^{n,k}\cap \Delta(a, \rho)}(z) dA(z)\right)^{\frac{q}{p}}|d \eta|\right)^{\frac{p}{q}} \nonumber\\
\asymp& \frac{(1-|a|^2)^{-\frac{p}{q}}}{A(\Delta(a, \rho))}\left(\int_{\mathbb{T}}\left(\int_{G_c^{n,k}\cap \Delta(a, \rho)} \left( \frac{1-|z|^2}{|1-z \ol{\eta}|} \right)^\lambda dA(z)\right)^{\frac{q}{p}}|d \eta|\right)^{\frac{p}{q}}\nonumber\\
\lesssim  & \frac{(1-|a|^2)^{-\frac{p}{q}} }{A(\Delta(a, \rho))}    A(G_c^{n,k}\cap\Delta(a,\rho))(1-|a|^2)^{\frac{p}{q}}\nonumber\\
= & \frac{A(G_c^{n,k}\cap\Delta(a,\rho))}{A(\Delta(a,\rho))} ,
\end{align} for all $a \in \mathbb{D}$.
By Proposition \ref{3.5}, there are constants $C',C''>0$ such that $C''\le \|f_a\|_{AT_p^q(\a)}\le C'$. Let \( 0 < \varepsilon < \frac{ C_5C''}{2} \).
Using (\ref{cls}) and (\ref{cls2}), there exists \(0 < \rho < 1\) such that  
$$
\begin{aligned}
\left(\frac{A(G_c^{n,k}\cap \Delta(a, \rho))}{A(\Delta(a, \rho))}\right)^{\frac{1}{p}} & \geq\left\|f_a \chi_{G_c^{n,k}\cap \Delta(a, \rho)}\right\|_{AT_p^q(\a)/\Upsilon_k} \\
&\geq\left\|f_a \chi_{G_c^{n,k}}\right\|_{AT_p^q(\a)/\Upsilon_k}-\left\|f_a \chi_{\Delta^c(a, \rho)}\right\|_{AT_p^q(\a)/\Upsilon_k} \\
& \geq C_5\|f_a\|_{AT_p^q(\a)/\Upsilon_k}-\varepsilon \geq  C_5C''-\varepsilon \geq \frac{ C_5C''}{2}
\end{aligned}
$$
for all $a \in \mathbb{D}$.  Therefore, there exist constants \( \delta > 0 \) and \( 0 < \rho < 1 \) such that
\[
A(G_c^{n,k}\cap \Delta(a, \rho)) \geq \delta A(\Delta(a, \rho))
\]
for all \( a \in \mathbb{D} \).

$(ii)\Leftrightarrow(iii)$  This has been shown in \cite{l0}.

$(iii)\Rightarrow(i)$ Assume that $(iii)$ holds. Let $B$ denote the set described in Lemma \ref{b}. Let \( 0 < \varepsilon < 1 \), \( 0 < \lambda < \frac{1}{2^p} \) and $n\in\mathbb{N}\cup\{0\}$.
If $$a \in \mathbb{D} \backslash B=\left\{a \in \mathbb{D}:|f^{(n)}(a)|^p\ge\varepsilon^{1+\frac{2}{p}} B_\lambda f(a)\right\},$$  we have
$$
\frac{B_\lambda f(a)}{|f^{(n)}(a)|^p} \leq \frac{1}{\varepsilon^{1+\frac{2}{p}}} .
$$
If $0<\delta<1$, we can choose $\lambda<\varepsilon^{\frac{2}{\delta}\left(1+\frac{2}{p}\right)}$. By Lemma \ref{1.}, we obtain
$$
\frac{A\left(E_\lambda(a)\right)}{A\left(E(a,\rho)\right)}>\frac{(2 / \delta) \log \left(1 / \varepsilon^{1+\frac{2}{p}}\right)}{\log \left(1 / \varepsilon^{1+\frac{2}{p}}\right)+(2 / \delta) \log \left(1 / \varepsilon^{1+\frac{2}{p}}\right)}>1-\frac{\delta}{2}.
$$
Thus,
\begin{align}\label{c}
	A\left(E_\lambda(a)\right)>\left(1-\frac{\delta}{2}\right) A\left(E(a,\rho)\right).
\end{align}
Let  \( \lambda < \min \left\{ \frac{1}{2^p}, \varepsilon^{\frac{2}{\delta}(1+\frac{2}{p})} \right\} \). By  (\ref{c}), we have
$$
\begin{aligned}
A\left(G_c^{n,k}\cap E_\lambda(a)\right) & =A\left(G_c^{n,k}\cap E(a,\rho)\right)-A\left(G_c^{n,k}\cap\left(E(a,\rho) \backslash E_\lambda(a)\right)\right) \\
& \geq \delta A\left(E(a,\rho)\right)-A\left(E(a,\rho) \backslash E_\lambda(a)\right) \\
& =\delta A\left(E(a,\rho)\right)-A\left(E(a,\rho)\right)+A\left(E_\lambda(a)\right) \\
& \geq \delta A\left(E(a,\rho)\right)-A\left(E(a,\rho)\right)+\left(1-\frac{\delta}{2}\right) A\left(E(a,\rho)\right) \\
& =\frac{\delta}{2} A\left(E(a,\rho)\right),
\end{aligned}
$$
for all $a \in \mathbb{D} \backslash B$.

Let $f\in AT_p^q(\a)$, $\eta\in\TT$ and $a \in \Gamma_\tau(\eta) \backslash B$. There exists $\tau'\in (\tau,1)$ such that  $$E_\lambda(a) \subset E(a,\rho) \subset \Gamma_{\tau'}(\eta).$$
So, 
\begin{align*}
&\frac{1}{A\left(E(a,\rho)\right)} \int_{G_c^{n,k}\cap \Gamma_{\tau'}(\eta)} \chi_{E(a,\rho)}(z)|f^{(n)}(z)|^p(1-|z|^2)^{np+\a}d A(z) \\
\geq&  \frac{\delta}{2A\left(G_c^{n,k}\cap E_\lambda(a)\right)} \int_{G_c^{n,k}\cap \Gamma_{\tau'}(\eta)} \chi_{E(a,\rho)}(z)|f^{(n)}(z)|^p(1-|z|^2)^{np+\a}d A(z) \\
\geq& \frac{\delta}{2A\left(G_c^{n,k}\cap E_\lambda(a)\right)} \int_{G_c^{n,k}\cap E_\lambda(a)} \chi_{E(a,\rho)}(z)|f^{(n)}(z)|^p(1-|z|^2)^{np+\a}d A(z) \\
\gtrsim& \frac{\delta \lambda}{2} |f^{(n)}(a)|^p(1-|a|^2)^{np+\a} .
\end{align*}
Integrating over the set $\Gamma_\tau(\eta) \backslash B$ and using Fubini's theorem on the left-hand side, we get
\begin{align*}
&\int_{G_c^{n,k}\cap \Gamma_{\tau'}(\eta)}|f^{(n)}(z)|^p(1-|z|^2)^{np+\a} \int_{\Gamma_\tau(\eta) \backslash B} \frac{\chi_{E(a, \rho)}(z)}{A\left(E(a,\rho)\right)} dA(a) dA(z) \\
\geq&  \frac{\delta \lambda}{2} \int_{\Gamma_\tau(\eta) \backslash B} |f^{(n)}(a)|^p(1-|a|^2)^{np+\a}dA(a).
\end{align*}
When $\frac{2 \rho}{1+\rho^2} \leq r<1$, using $\chi_{E(a,\rho)}(z) \leq \chi_{\Delta(a, r)}(z)=\chi_{\Delta(z,r)}(a)$ again
  we have
\begin{align*}
&\int_{G_c^{n,k}\cap \Gamma_{\tau'}(\eta)}|f^{(n)}(z)|^p(1-|z|^2)^{np+\a} \int_{\Gamma_\tau(\eta) \backslash B} \frac{\chi_{\Delta(z, r)}(a)}{A\left(E(a,\rho)\right)} dA(a) dA(z) \\
\geq&  \frac{\delta \lambda}{2} \int_{\Gamma_\tau(\eta) \backslash B} |f^{(n)}(a)|^p(1-|a|^2)^{np+\a}dA(a).
\end{align*}
Similar to the estimate in Lemma \ref{a}, we can obtain that there exists a positive constant \( C_6 \), which depends only on \( \rho \), such that
$$
\int_{\Gamma_\tau(\eta) \backslash B} \frac{\chi_{\Delta(z, r)}(a)}{A(E(a, \rho))} d A(a) \leq C_6,  \quad z\in\D.
$$
Thus, 
\begin{align*}
C_6 \int_{G_c^{n,k}\cap \Gamma_{\tau'}(\eta)}|f^{(n)}(z)|^p(1-|z|^2)^{np+\a} dA(z) 
\geq  \frac{\delta \lambda}{2} \int_{\Gamma_\tau(\eta) \backslash B} |f^{(n)}(a)|^p(1-|a|^2)^{np+\a}dA(a).
\end{align*}
Using Lemma \ref{b}, we obtain
\begin{align*}
&\int_{G_c^{n,k}\cap \Gamma_{\tau'}(\eta)}|f^{(n)}(z)|^p(1-|z|^2)^{np+\a}dA(z)\\
\ge& \frac{\delta \lambda}{2C_6} \int_{\Gamma_\tau(\eta) \backslash B} |f^{(n)}(a)|^p(1-|a|^2)^{np+\a}dA(a) \\
=&\frac{\delta \lambda}{2C_6} \int_{\Gamma_\tau(\eta) } |f^{(n)}(a)|^p(1-|a|^2)^{np+\a}dA(a)-\frac{\delta \lambda}{2C_6} \int_{\Gamma_\tau(\eta) \cap B} |f^{(n)}(a)|^p(1-|a|^2)^{np+\a}dA(a) \\
\ge&\frac{\delta \lambda}{2C_6} \int_{\Gamma_\tau(\eta) } |f^{(n)}(a)|^p(1-|a|^2)^{np+\a}dA(a)-\frac{C_2\varepsilon\delta \lambda}{2C_6} \int_{\Gamma_{\tau'}(\eta)} |f^{(n)}(a)|^p(1-|a|^2)^{np+\a}dA(a).
\end{align*}
Thus,
\begin{align*}
&\int_{G_c^{n,k}\cap \Gamma_{\tau'}(\eta)}|f^{(n)}(z)|^p(1-|z|^2)^{np+\a}dA(z)+\frac{C_2\varepsilon\delta \lambda}{2C_6} \int_{\Gamma_{\tau'}(\eta)} |f^{(n)}(a)|^p(1-|a|^2)^{np+\a}dA(a)\\
\ge&\frac{\delta \lambda}{2C_6} \int_{\Gamma_\tau(\eta) } |f^{(n)}(a)|^p(1-|a|^2)^{np+\a}dA(a).
\end{align*}
Hence,
\begin{align*}
&\left( \int_{G_c^{n,k}\cap \Gamma_{\tau'}(\eta)}|f^{(n)}(z)|^p(1-|z|^2)^{np+\a}dA(z) \right)^\frac{1}{p}\\
&+\left(\frac{C_2\varepsilon\delta \lambda}{2C_6} \right)^\frac{1}{p}  \left(\int_{\Gamma_{\tau'}(\eta)} |f^{(n)}(a)|^p(1-|a|^2)^{np+\a}dA(a)\right)^\frac{1}{p}\\
\ge&\left(\frac{\delta \lambda}{2C_6}\right)^\frac{1}{p} \left(\int_{\Gamma_\tau(\eta) } |f^{(n)}(a)|^p(1-|a|^2)^{np+\a}dA(a)\right)^\frac{1}{p}.
\end{align*}
Given that $q\ge1$, applying Minkowski's inequality, we get
\begin{equation}\label{11}
\begin{aligned}
&\left(\int_{\mathbb{T}}\left( \int_{G_c^{n,k}\cap \Gamma_{\tau'}(\eta)}|f^{(n)}(z)|^p(1-|z|^2)^{np+\a}dA(z) \right)^\frac{q}{p} |d\eta|\right)^{\frac{1}{q}}\\
&+\left(\frac{C_2\varepsilon\delta \lambda}{2C_6} \right)^\frac{1}{p}  \left(\int_{\mathbb{T}}\left(\int_{\Gamma_{\tau'}(\eta)} |f^{(n)}(a)|^p(1-|a|^2)^{np+\a}dA(a)\right)^\frac{q}{p}|d\eta|\right)^{\frac{1}{q}}\\
\ge&\left(\frac{\delta \lambda}{2C_6}\right)^\frac{1}{p} \left(\int_{\mathbb{T}}\left(\int_{\Gamma_\tau(\eta) } |f^{(n)}(a)|^p(1-|a|^2)^{np+\a}dA(a)\right)^\frac{q}{p}|d\eta|\right)^{\frac{1}{q}}.
\end{aligned}
\end{equation}
Since \( 0 < \tau < {\tau'} < 1 \),  there exists a positive constant \( C_7 \) that depends only on \( \rho \) such that
\begin{equation}\label{12}
\begin{aligned}
&\left(\int_{\mathbb{T}}\left(\int_{\Gamma_{\tau'}(\eta) } |f^{(n)}(a)|^p(1-|a|^2)^{np+\a}dA(a)\right)^\frac{q}{p}|d\eta|\right)^{\frac{1}{q}}\\
\le&C_7\left(\int_{\mathbb{T}}\left(\int_{\Gamma_\tau(\eta) } |f^{(n)}(a)|^p(1-|a|^2)^{np+\a}dA(a)\right)^\frac{q}{p}|d\eta|\right)^{\frac{1}{q}}.
\end{aligned}
\end{equation}
Choosing \( \varepsilon \) sufficiently small such that \(  \left(1 - C_2^{\frac{1}{p}} C_7 \varepsilon^{\frac{1}{p}} \right)> 0 \). Combining $(\ref{11})$ and $(\ref{12})$, it follows that
\begin{align*}
&\left(\int_{\mathbb{T}}\left( \int_{G_c^{n,k}\cap \Gamma_{\tau'}(\eta)}|f^{(n)}(z)|^p(1-|z|^2)^{np+\a}dA(z) \right)^\frac{q}{p} |d\eta|\right)^{\frac{1}{q}}\\
\ge& \left(1-C_2^{\frac{1}{p}}C_7\varepsilon^\frac{1}{p}\right)\left( \frac{\delta\lambda}{2C_6}\right)^{\frac{1}{p}} \left(\int_{\mathbb{T}}\left(\int_{\Gamma_\tau(\eta)} |f^{(n)}(a)|^p(1-|a|^2)^{np+\a}dA(a)\right)^\frac{q}{p}|d\eta|\right)^{\frac{1}{q}}\\
\ge&\left(1-C_2^{\frac{1}{p}}C_7\varepsilon^\frac{1}{p}\right)\left( \frac{\delta\lambda}{2C_6}\right)^{\frac{1}{p}} \|f\|_{AT_p^q(\a)/\Upsilon_k}.
\end{align*}
Since 
$G_c^{n,k}=\left\{z \in \mathbb{D}:|g^{(n-k)}(z)|(1-|z|^2)^{n-k}>c\right\},$ 
 using the fact that $ T_g^{n,k}f(0) = (T_g^{n,k}f)'(0)=\cdots=(T_g^{n,k}f)^{(n-1)}(0)= 0 $
and Lemma \ref{Tfn}, we obtain
$$
\begin{aligned}
 \| T_g^{n,k}f \|_{AT_p^q(\a)/\Upsilon_k} 
=&\left(\int_{\mathbb{T}}\left(\int_{\Gamma_{\tau'}(\eta)}|f^{(k)}(z)|^p|g^{(n-k)}(z)|^p(1-|z|^2)^{np+\a} dA(z)\right)^{\frac{q}{p}}|d\eta|\right)^{\frac{1}{q}} \\
\geq&\left(\int_{\mathbb{T}}\left(\int_{\Gamma_{\tau'}(\eta) \cap G_c^{n,k}}|f^{(k)}(z)|^p|g^{(n-k)}(z)|^p(1-|z|^2)^{np+\a} dA(z)\right)^{\frac{q}{p}}|d\eta|\right)^{\frac{1}{q}} \\
\geq& c\left(\int_{\mathbb{T}}\left(\int_{\Gamma_{\tau'}(\eta) \cap G_c^{n,k}}|f^{(k)}(z)|^p(1-|z|^2)^{kp+\a} dA(z)\right)^{\frac{q}{p}}|d\eta|\right)^{\frac{1}{q}}\\
\geq& c\left(1-C_2^{\frac{1}{p}}C_7\varepsilon^\frac{1}{p}\right)\left( \frac{\delta\lambda}{2C_6}\right)^{\frac{1}{p}} \| f \|_{AT_p^q(\alpha)/\Upsilon_k}.
\end{aligned} 
$$ 
The proof is complete.
\end{proof}

\noindent \textbf{Acknowledgments.} 
 The   authors are supported  by GuangDong Basic and Applied Basic Research Foundation (No. 2023A1515010614). \\

\noindent\textbf{Data Availability.}  Data sharing is not applicable for this article as no datasets were generated or analyzed during the current study. \\

 \noindent \textbf{Conflict of interest.}   The authors declare no competing interests.

\vskip 6mm

\end{document}